\newtheorem{theorem}{Theorem}[subsection]
\newtheorem{proposition}[theorem]{Proposition}
\newtheorem{lemma}[theorem]{Lemma}
\newtheorem{example}[theorem]{Example}
\newtheorem{corollary}[theorem]{Corollary}
\newtheorem{assertion}[theorem]{Assertion}
\newtheorem{definition}[theorem]{Definition}
\renewcommand{\subsection}{\@startsection{subsection}{1}
{0pt}{3.25ex plus 1ex minus.2ex}{-1em}{\normalfont\normalsize\bf}}
\begin{document}

\title{{Operators Affiliated to Banach Lattice Properties and the Enveloping Norms}}
\date{}
\maketitle\author{\centering{{Eduard Emelyanov, Svetlana Gorokhova\\ 
}

\bigskip
\bigskip
\abstract{Several recent papers were devoted to various 
modifications of limited, Gro\-then\-dieck, 
and Dunford--Pettis operators, etc., through involving the Banach lattice structure.
In the present paper, it is shown that many of these operators appear as
operators affiliated to well known properties of Banach lattices,
like the disjoint (dual) Schur property, the disjoint Gro\-then\-dieck property,
the property (d), and the sequential \text{\rm w}$^\ast$-continuity
of the lattice operations. It is proved that the spaces consisting of 
regularly versions of the above operators are all Banach spaces. 
The domination problem for such operators is investigated.}
\vspace{5mm}

{\bf Keywords:} Banach lattice, affiliated operators, enveloping norm, domination problem

{\bf MSC2020:} {\normalsize 46B25, 46B42, 46B50, 47B60}

}}

\section{Preliminaries}

Throughout the paper, vector spaces are real; operators are 
linear and boun\-ded; letters $X$, $Y$ stands for Banach spaces; 
and $E$, $F$ for Banach lattices. We denote by $B_X$ 
the closed unit ball of $X$; by $\text{\rm L}(X,Y)$ 
the space of all bounded operators 
from $X$ to $Y$; and by $E_+$ the positive cone of $E$. 
An operator $T:E\to F$ is called {\em regular} if $T=T_1-T_2$ for some 
$T_1,T_2\in\text{\rm L}_+(E,F)$. We denote by $\text{\rm L}_r(E,F)$ 
($\text{\rm L}_{ob}(E,F)$, $\text{\rm L}_{oc}(E,F)$) the space of all regular 
(\text{\rm o}-bounded, \text{\rm o}-continuous) operators from $E$ to $F$.

\subsection{}
Recall that a bounded $A\subseteq X$ is said to be a {\em limited} set 
(resp. a \text{\rm DP}-{\em set}) if each 
\text{\rm w}$^\ast$-null (resp. \text{\rm w}-null) sequence in $X'$ is 
uniformly null on $A$. Similarly, a bounded $A\subseteq E$ is called 
an \text{\rm a}-{\em limited} set (resp. an \text{\rm a-DP}-{\em set}) if each disjoint
\text{\rm w}$^\ast$-null (resp. disjoint \text{\rm w}-null) sequence in $E'$ is 
uniformly null on $A$ (cf. \cite{AE,AEW,CCJ,El}).
Each relatively compact set is limited, each limited set is an 
\text{\rm a}-limited \text{\rm DP}-set, and each  
\text{\rm DP}-set is an \text{a-\rm DP}-set.

\begin{assertion}{\em (cf. \cite{BD})}
{\em Let $A\subseteq X$ be limited.  Then$:$
\begin{enumerate}[{\rm (i)}]
\item 
Every sequence in $A$ has a \text{\rm w}-Cauchy subsequence.
\item 
If $X$ is either separable or else reflexive, then $A$ is relatively compact.
\item 
If $\ell^1$ does not embed in $X$, then $A$ is relatively \text{\rm w}-compact.
\end{enumerate}}
\end{assertion}
\noindent
The following technical fact (cf. \cite[Prop.1.2.1]{AEG4}) is useful.

\begin{assertion}\label{uniform convergence on A}
{\em Let $A\subseteq X$ and $B\subseteq X'$ be nonempty. Then$:$
\begin{enumerate}[{\rm (i)}] 
\item
A sequence $(f_n)$ in $X'$ is uniformly null on $A$
iff $f_n(a_n)\to 0$ for each sequence $(a_n)$ in $A$. 
\item
A sequence $(x_n)$ in $X$ is uniformly null on $B$
iff $b_n(x_n)\to 0$ for each sequence $(b_n)$ in $B$.
\end{enumerate}}
\end{assertion}
\noindent
A bounded $B\subseteq X'$ (resp. $B\subseteq E'$)
is called an \text{\rm L}-{\em set} 
(resp. an \text{\rm a-L}-{\em set})
if each \text{\rm w}-null sequence in $X$ 
(resp. each disjoint \text{\rm w}-null sequence in $E$)
is uniformly null on $B$ (cf. \cite{Ghenciu}). The next fact follows
from Assertion \ref{uniform convergence on A}.

\begin{assertion}\label{limited and DP sets}
{\em A bounded subset $A$ of $X$ is
\begin{enumerate}[{\rm (i)}]
\item
limited iff $f_n(a_n)\to 0$ for all 
\text{\rm w}$^\ast$-null $(f_n)$ in $X'$ and all $(a_n)$ in $A$;
\item
a \text{\rm DP}-set iff $f_n(a_n)\to 0$ for all  
\text{\rm w}-null $(f_n)$ in $X'$ and all $(a_n)$ in $A$.
\end{enumerate}
A bounded subset $B$ of $X'$ is
\begin{enumerate}
\item[{\rm (iii)}]
an \text{\rm L}-set iff $b_n(x_n)\to 0$ for all $(b_n)$ in $B$ and all 
\text{\rm w}-null $(x_n)$ in $X$.
\end{enumerate}
A bounded subset $A$ of $E$ is
\begin{enumerate}
\item[{\rm (iv)}]
\text{\rm a}-limited iff $f_n(a_n)\to 0$ for all 
disjoint \text{\rm w}$^\ast$-null $(f_n)$ in $E'$ and all $(a_n)$ in $A$;
\item[{\rm (v)}]
an \text{\rm a-DP}-set iff $f_n(a_n)\to 0$ for all  
disjoint \text{\rm w}-null $(f_n)$ in $E'$ and all $(a_n)$ in $A$.
\end{enumerate}
A bounded subset $B$ of $E'$ is
\begin{enumerate}
\item[{\rm (vi)}]
an \text{\rm a-L}-set
iff $b_n(x_n)\to 0$ for all $(b_n)$ in $B$ and all 
disjoint \text{\rm w}-null $(x_n)$ in $E$.
\end{enumerate}}
\end{assertion}

\subsection{}
Let us recall the following properties of Banach spaces 
and describe operators affiliated to these properties.

\begin{definition}\label{Main Schur property} 
{\em A Banach space $X$ is said to possess:
\begin{enumerate}[a)]
\item 
the {\em Schur property} (briefly, $X\in\text{\rm (SP)}$)
if each \text{\rm w}-null sequence in $X$ is norm null;
\item 
the {\em Grothendieck property} (briefly, $X\in\text{\rm (GP)}$) 
if each \text{\rm w}$^\ast$-null sequence in $X'$ is \text{\rm w}-null;
\item
the {\em Dunford--Pettis property}  (briefly, $X\in\text{\rm (DPP)}$) if $f_n(x_n)\to 0$ for 
each \text{\rm w}-null $(f_n)$ in $X'$ and each
\text{\rm w}-null $(x_n)$ in $X$;
\item 
the {\em Gelfand--Phillips property} (briefly, $X\in\text{\rm (GPP)}$)
if each limited subset of $X$ is relatively compact (cf. \cite[p.424]{Ghenciu}).
\item 
the {\em Bourgain--Diestel property} (briefly, $X\in(\text{\rm BDP})$)
if each limited subset of $X$ is relatively \text{\rm w}-compact \cite{Emma1}.
\end{enumerate}}
\end{definition}
\noindent
Dedekind complete AM-spaces with a strong order unit belong to \text{\rm (GP)},
for a comprehensive rescent source on the Grothendieck property see \cite{GK}.
All separable and all reflexive Banach spaces belong to (\text{\rm GPP}) \cite{BD}.
A Dedekind $\sigma$-complete Banach lattice $E$ belongs $(\text{\rm GPP})$ 
iff $E$ has $\text{\rm o}$-continuous norm \cite{Buh}.
In particular, $c_0,\ell^1\in(\text{\rm GPP})$, yet $\ell^\infty\not\in(\text{\rm GPP})$. 
Clearly, $\text{\rm (GPP)}\Rightarrow\text{\rm (BDP)}$. By \cite{BD},
$X\in\text{\rm (BDP)}$ whenever $X$ contains no copy of $\ell^1$. 
Applying redistribution (as in \cite{AEG2}) between the domain and range
to the properties of Definition \ref{Main Schur property},
we obtain the following list of the affiliated operators.

\begin{definition}\label{Main Schur affiliated operators} 
{\em An operator $T:X\to Y$ is called:
\begin{enumerate}[a)]
\item 
an \text{\rm [SP]}-{\em operator} if $(Tx_n)$ is norm null for each
\text{\rm w}-null $(x_n)$ in $X$;
\item 
a \text{\rm [GP]}-{\em operator} if $(T'f_n)$ is \text{\rm w}-null in $X'$
for each \text{\rm w}$^\ast$-null $(f_n)$ in $Y'$;
\item
a \text{\rm [DPP]}-{\em operator} if $f_n(Tx_n)\to 0$ for 
each \text{\rm w}-null $(f_n)$ in $Y'$ and each
\text{\rm w}-null $(x_n)$ in $X$;
\item 
a $\text{\rm [GPP]}$-{\em operator} if $T$ carries 
limited sets onto relatively compact sets;
\item 
a $\text{\rm [BDP]}$-{\em operator} if $T$ carries 
limited sets onto relatively \text{\rm w}-compact sets.
\end{enumerate}}
\end{definition}
\noindent 
Note that \text{\rm [SP]}-operators coincide with Dunford--Pettis operators,
\text{\rm [GP]}-opera\-tors coincide with Grothendieck operators,
whereas \text{\rm [DPP]}-operators agree with weak Dunford--Pettis operators
of \cite[p.349]{AlBu}.

\begin{definition}\label{Main affiliated property}
{\em Let ${\cal P}$ be a class of operators between Banach spaces.
A Banach space $X$ is said to be {\em affiliated with} ${\cal P}$ 
if $I_X\in{\cal P}$. In this case we write $X\in({\cal P})$.}
\end{definition}
\noindent
It should be clear that if $(P)$ is one of the five
properties mentioned in Definition \ref{Main Schur property},
then $X\in(P)$ iff $X$ affiliated with $[P]$-operators; symbolically
$([(P)])=(P)$. It is worth noticing that the reflexivity of Banach spaces is affiliated 
with \text{\rm w}-compact operators and vice versa, whereas the finite 
dimensionality is affiliated with compact operators and vice versa.

\subsection{}
We recall the following classes of operators.

\begin{definition}\label{a-o-limited operator} 
{\em An operator 
\begin{enumerate}[a)]
\item 
$T:X\to F$ is called {\em almost Grothendieck} (shortly, $T$ is \text{\rm a-G})
if $T'$ takes disjoint $\text{\rm w}^\ast$-null sequences of $F'$ 
to \text{\rm w}-null sequences of $X'$ \cite[Def.3.1]{GM}.
\item 
$T:X\to F$ is called {\em almost limited} (shortly, $T$ is \text{\rm Lm})
if $T(B_X)$ is \text{\rm a}-limited; 
i.e., $T'$ takes disjoint $\text{\rm w}^\ast$-null sequences of $F'$ to 
norm null sequences of $X'$ \cite{EMM}. 
\item 
$T:E\to Y$ is called {\em almost Dunford--Pettis} 
(shortly, $T$ is \text{\rm a-DP}) if $T$ takes disjoint 
\text{\rm w}-null sequences to norm null ones \cite{San}.
\item 
$T:E\to Y$ is called 
{\em almost weak Dun\-ford--Pet\-tis} (shortly, $T$ is \text{\rm a-wDP}) 
if $f_n(Tx_n) \to 0$ whenever $(f_n)$ is \text{\rm w}-null in $Y'$ and 
$(x_n)$ is disjoint \text{\rm w}-null in $E$ \cite[Def.5.3.1b)]{AEG4}.
\item 
$T:E\to Y$ is called {\em $\text{\rm o}$-limited} (shortly, $T$ is \text{\rm o-Lm})
if $T[0,x]$ is limited for all 
$x\in E_+$; i.e., $(T'f_n)$ is uniformly null on all order intervals $[0,x]\subseteq E_+$
for each $\text{\rm w}^\ast$-null $(f_n)$ of $Y'$ \cite{KM}. 
\item 
$T:E\to F$ is called {\em almost $\text{\rm o}$-limited}
(shortly, $T$ is \text{\rm a-o-Lm}) if $T[0,x]$ is \text{\rm a}-limited
for all $x\in E_+$; i.e., $(T'f_n)$ is uniformly null on all order intervals $[0,x]\subseteq E_+$
for each disjoint $\text{\rm w}^\ast$-null $(f_n)$ of $F'$ \cite[Def.3.1]{KFMM}.
\end{enumerate}}
\end{definition}
\noindent
Clearly: $\text{\rm a-Lm}(X,F)\subseteq\text{\rm a-G}(X,F)$;
$\text{\rm a-DP}(E,Y)\subseteq\text{\rm a-wDP}(E,Y)$;
$\text{\rm Lm}(E,Y)\subseteq\text{\rm o-Lm}(E,Y)$; and
$\text{\rm o-Lm}(E,F)\subseteq\text{\rm a-o-Lm}(E,F)$.
\vspace{3mm}

\noindent
Let ${\cal P}\subseteq\text{\rm L}(E,F)$. We call elements of ${\cal P}$ by
${\cal P}$-operators and denote by ${\cal P}(E,F):= {\cal P}$ 
the set of all ${\cal P}$-operators in $\text{\rm L}(E,F)$.
The ${\cal P}$-operators satisfy the {\em domination 
property} if $S\in{\cal P}$ whenever $0\le S\le T\in{\cal P}$.
An operator $T\in\text{\rm L}(E,F)$ is said to be
${\cal P}$-{\em dominated} if $\pm T\le U$ for some $U\in{\cal P}$.

\subsection{Enveloping norms on spaces of regularly ${\cal P}$-operators.}
Regularly ${\cal P}$-operators were introduced in \cite{AEG3,Emel} and the 
enveloping norms in \cite{AEG4,Emel}. Here we recall basic
results. By \cite[Prop.1.3.6]{Mey}, $\text{\rm L}_r(E,F)$ 
is a Banach space under the {\em regular norm}
$\|T\|_r:=\inf\{\|S\|:\pm T\le S\in\text{\rm L}(E,F)\}$. Moreover, 
$\|T\|_r=\inf\{\|S\|: S\in\text{\rm L}(E,F), |Tx|\le S|x|\ \forall x\in E\}\ge\|T\|$
for every $T\in\text{\rm L}_r(E,F)$.
If $F$ is Dedekind complete, then $(\text{\rm L}_r(E,F),\|\cdot\|_r)$ is a Banach lattice
and $\|T\|_r=\|~|T|~\|$ for every $T\in\text{\rm L}_r(E,F)$.
The following definition was introduced in \cite[Def.2]{Emel} 
(cf. also \cite[Def.1.5.1]{AEG3}). 

\begin{definition}\label{rP-operators}{\em
Let ${\cal P}\subseteq\text{\rm L}(E,F)$. An operator $T:E\to F$ is called 
a {\it regularly} ${\cal P}$-{\it operator}
(shortly, an r-${\cal P}$-{\it operator}), if $T=T_1-T_2$ 
with $T_1,T_2\in{\cal P}\cap\text{\rm L}_+(E,F)$.
We denote by: 
${\cal P}_r(E,F)$ the set of all regular operators in ${\cal P}(E,F)$;  
and by $\text{\rm r-}{\cal P}(E,F)$ the set of all regularly 
${\cal P}$-operators in $\text{\rm L}(E,F)$.}
\end{definition}

\begin{assertion}\label{prop elem}
{\rm (\cite[Prop.1.5.2]{AEG3})}
Let ${\cal P}\subseteq\text{\rm L}(E,F)$, ${\cal P}\pm{\cal P}\subseteq{\cal P}\ne\emptyset$, 
and $T\in\text{\rm L}(E,F)$. Then the following holds.
\begin{enumerate}[\em (i)]
\item 
$T$ is an {\em r-}${\cal P}$-operator iff $T$ is a ${\cal P}$-dominated ${\cal P}$-operator.  
\item 
Suppose ${\cal P}$-operators satisfy the domination property and the modulus $|T|$ exists 
in $\text{\rm L}(E,F)$. Then $T$ is an \text{\rm r}-${\cal P}$-operator iff  $|T|\in{\cal P}$.
\end{enumerate}
\end{assertion}
\noindent
The replacement of $\text{\rm L}(E,F)$ in the definition 
of the regular norm by an arbitrary subspace 
${\cal P}\subseteq\text{\rm L}(E,F)$:
\begin{equation}\label{enveloping norm}
   \|T\|_{\text{\rm r-}{\cal P}}:=\inf\{\|S\|:\pm T\le S\in{\cal P}\} \ \ 
   \ \ (T\in\text{\rm r-}{\cal P}(E,F))
\end{equation}
gives the so-called {\em enveloping norm}
on $\text{\rm r-}{\cal P}(E,F)$ \cite{AEG4}.
Furthermore
\begin{equation}\label{Enveloping P-norm 2}
   \|T\|_{\text{\rm r-}{\cal P}}=
   \inf\{\|S\|: S\in{\cal P}\ \&\ (\forall x\in E)\ |Tx|\le S|x|\}
   \ \ \ (T\in\text{\rm r-}{\cal P}(E,F))
\end{equation}
by \cite[Lm.2.2.1]{AEG4}, and if ${\cal P}_1$ is a subspace of ${\cal P}$ then
\begin{equation}\label{Enveloping P-norm 1}
   \|T\|_{\text{\rm r-}{\cal P}_1}\ge\|T\|_{\text{\rm r-}{\cal P}}\ge\|T\|_r\ge\|T\| 
   \ \ \ \ \ (\forall\ T\in\text{\rm r-}{\cal P}_1(E,F)).
\end{equation}

\begin{assertion}\label{P-norm}
{\rm (\cite[Thm.2.3.1]{AEG4})}
Let ${\cal P}$ be a subspace of $\text{\rm L}(E,F)$
closed in the operator norm.
Then $\text{\rm r-}{\cal P}(E,F)$ is a Banach space under the
enveloping norm.
\end{assertion}
\noindent
Let ${\cal P}\subseteq\text{\rm L}(E,F)$, and 
denote ${\cal P'}:=\{T': T\in {\cal P}\}\subseteq\text{\rm L}(F',E')$. 
Clearly, $\text{\rm r-}{\cal P'}(F',E')=(\text{\rm r-}{\cal P}(E,F))'$.
Since $\|S'\|=\|S\|$, it follows from (\ref{enveloping norm})
$$
   \|T'\|_{\text{\rm r-}{\cal P'}}=\inf\{\|S'\|:\pm T'\le S'\in{\cal P'}\}=
   \inf\{\|S\|:\pm T\le S\in{\cal P}\}= \|T\|_{\text{\rm r-}{\cal P}}.
$$
If ${\cal P}\subseteq\text{\rm L}(E,F)$ is closed in the operator norm
then ${\cal P'}\subseteq\text{\rm L}(F',E')$ is also closed in the operator norm.
So, the next fact follows from Assertion \ref{P-norm}.

\begin{corollary}\label{adj-P-norm}
{\rm Let ${\cal P}$ be a subspace of $\text{\rm L}(E,F)$
closed in the operator norm. Then $\text{\rm r-}{\cal P'}(F',E')$ 
is a Banach space under the enveloping norm.}
\end{corollary}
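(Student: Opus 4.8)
The plan is to deduce the statement directly from Assertion~\ref{P-norm} applied to the subspace ${\cal P}'\subseteq\text{\rm L}(F',E')$. To invoke that assertion I must verify two things: that ${\cal P}'$ is a linear subspace of $\text{\rm L}(F',E')$, and that it is closed in the operator norm. The first is immediate, since $(S+T)'=S'+T'$ and $(\lambda T)'=\lambda T'$, so ${\cal P}'$ is the image of the linear subspace ${\cal P}$ under the linear map $T\mapsto T'$.

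For the second point I would use that the adjoint map $\Phi\colon\text{\rm L}(E,F)\to\text{\rm L}(F',E')$, $\Phi(T)=T'$, is an isometry, because $\|T'\|=\|T\|$ for every $T$. Since ${\cal P}$ is norm-closed in the Banach space $\text{\rm L}(E,F)$, it is itself complete; its isometric image ${\cal P}'=\Phi({\cal P})$ is therefore complete as well, and a complete subspace of a metric space is closed. Hence ${\cal P}'$ is a norm-closed subspace of $\text{\rm L}(F',E')$, exactly as recorded in the remark preceding the statement.

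With these two facts in hand, Assertion~\ref{P-norm} (with $E$, $F$, ${\cal P}$ there replaced by $F'$, $E'$, ${\cal P}'$) yields that $\text{\rm r-}{\cal P}'(F',E')$ is a Banach space under the enveloping norm $\|\cdot\|_{\text{\rm r-}{\cal P}'}$, which is the assertion of the corollary.

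Alternatively, one can bypass re-running Assertion~\ref{P-norm}: that assertion already gives that $\text{\rm r-}{\cal P}(E,F)$ is complete under $\|\cdot\|_{\text{\rm r-}{\cal P}}$; the adjoint map identifies it with $\text{\rm r-}{\cal P}'(F',E')=(\text{\rm r-}{\cal P}(E,F))'$, and the computation displayed just before the statement shows this identification is an isometry for the enveloping norms, so $\text{\rm r-}{\cal P}'(F',E')$ inherits completeness. Either way, no genuine obstacle arises; the only point requiring a moment's care is the transfer of norm-closedness from ${\cal P}$ to ${\cal P}'$, which rests on the elementary fact that isometries preserve completeness.
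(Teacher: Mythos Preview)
Your proof is correct and follows essentially the same approach as the paper: the paper also deduces the corollary by noting that ${\cal P}'$ is closed in the operator norm (since $\|S'\|=\|S\|$) and then invoking Assertion~\ref{P-norm} for ${\cal P}'\subseteq\text{\rm L}(F',E')$. Your alternative route via the isometry $\|T'\|_{\text{\rm r-}{\cal P}'}=\|T\|_{\text{\rm r-}{\cal P}}$ is likewise already set up in the paragraph preceding the corollary and is equally valid.
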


\subsection{} 
In Section 2, we introduce the main definitions and discuss basic properties of 
affiliated operators, especially related to enveloping norms.
Section 3 is devoted to domination results for affiliated operators,
under the consideration, with special emphasize on the property \text{\rm (d)}
and on sequential \text{\rm w}-continuity of
lattice operations in Banach lattices. For further unexplained terminology and notations, 
we refer to \cite{AlBu,AEG2,AEG3,AEG4,AEW,Diestel,DU,Mey,Wnuk1,Wnuk3,Za}.

\section{Affiliated operators and enveloping norms}

Several recent papers were devoted to various 
modifications of limited, Gro\-then\-dieck, L- and M-weakly compact,
and Dunford--Pettis operators, through involving the structure of Banach lattices 
(see, e.g. \cite{AEG3,AEG4,AE,AEW,BLM1,CCJ,El,EAS,EMM,
FKM,GM,LM1,MF,KFMM,BLM0,BLM2,MFMA,Wnuk3}, Definition \ref{a-o-limited operator}). 
In this section we show that many of these operators appear as
operators affiliated to well known properties of Banach lattices
like the disjoint (dual) Schur property, the disjoint Gro\-then\-dieck property,
the property (d), and the sequential \text{\rm w}$^\ast$-continuity
of the lattice operations. In continuation of \cite{AEG4}
we shortly discuss the enveloping norms correspondent to 
these affiliated operators. 

\subsection{}
Recall that $E$ (resp. $E'$) has {\em sequentially} 
\text{\rm w}-{\em continuous} (resp. {\em sequentially} 
\text{\rm w}$^\ast$-{\em continuous}) 
{\em lattice operations} if $(|x_n|)$ is \text{\rm w}-null 
(resp. \text{\rm w}$^\ast$-null) for each \text{\rm w}-null $(x_n)$ in $E$
(resp. for each \text{\rm w}$^\ast$-null $(x_n)$ in $E'$). 

\begin{assertion}\label{KM}
{\rm (see \cite[Prop.3.1]{KM}) The following are equivalent.
\begin{enumerate}[(i)]
\item 
$E'$ has sequentially $\text{\rm w}^\ast$-continuous lattice operations.
\item
Each order interval in $E$ is limited.
\end{enumerate}}
\end{assertion}
\noindent
In particular, the dual $E'$ of each discrete Banach lattice $E$ 
with order continuous norm has sequentially \text{\rm w}$^\ast$-continuous 
lattice operations \cite[Prop.1.1]{Wnuk3}, \cite[Cor.3.2]{KM}. 
Under the disjointness assumption on a sequence 
in $E$ we have the following fact.

\begin{assertion}\label{disj w-null is mod w-null}
{\rm (cf. \cite[Thm.4.34]{AlBu})
For every disjoint $\text{\rm w}$-null $(x_n)$ in $E$, 
the sequence $(|x_n|)$ is also \text{\rm w}-null.}
\end{assertion}
\noindent
This is no longer true for \text{\rm w}$^\ast$-convergence 
(e.g. the sequence $f_n:=e_{2n}-e_{2n+1}$ is disjoint 
$\text{\rm w}^\ast$-null in $c'$ yet 
$|f_n|({\mathbb 1}_{\mathbb N})\equiv 2\not\to 0$ \cite[Ex.2.1]{CCJ}).
We recall the following properties of Banach lattices.

\begin{definition}\label{Schur property}
{\em A Banach lattice $E$ has:
\begin{enumerate}[a)]
\item 
the {\em positive Schur property} (briefly, $E\in\text{\rm (PSP)}$)
if each \text{\rm w}-null sequence in $E_+$ is norm null (cf. \cite{Wnuk1});
\item 
the {\em positive disjoint Schur property} (briefly, $E\in\text{\rm (PDSP)}$)
if each disjoint \text{\rm w}-null sequence in $E_+$ is norm null;
\item 
the {\em disjoint Schur property} (briefly, $E\in\text{\rm (DSP)}$)
if each disjoint \text{\rm w}-null sequence in $E$ is norm null;
\item 
the {\em dual positive Schur property} (briefly, $E\in\text{\rm (DPSP)}$) if each 
\text{\rm w}$^\ast$-null sequence in $E'_+$ is norm null \cite[Def.3.3]{AEW};
\item 
the {\em dual disjoint Schur property} (briefly, $E\in\text{\rm (DDSP)}$)
if each disjoint \text{\rm w}$^\ast$-null sequence in $E'$ is norm null \cite[Def.3.2]{MEM};
\item 
the {\em positive Grothendieck property} (briefly, $E\in\text{\rm (PGP)}$) if each  
\text{\rm w}$^\ast$-null sequence in $E'_+$ is \text{\rm w}-null 
(cf. \cite[p.760]{Wnuk3}); 
\item 
the {\em disjoint Grothendieck property} (briefly, $E\in\text{\rm (DGP)}$) 
if each disjoint \text{\rm w}$^\ast$-null sequence in $E'$ is \text{\rm w}-null
(cf. \cite[Def.2.1.3]{AEG3}); 
\item 
the \text{\rm (swl)}-{\em property} (briefly, $E\in\text{\rm (swl)}$)
if $(|x_n|)$ is \text{\rm w}-null for each \text{\rm w}-null sequence $(x_n)$ in $E$;
\item 
the $\text{\rm (\text{\rm sw}$^\ast$l)}$-{\em property} 
(briefly, $E\in\text{\rm (\text{\rm sw}$^\ast$l)}$)
if $(|f_n|)$ is \text{\rm w}$^\ast$-null
for each \text{\rm w}$^\ast$-null sequence $(f_n)$ in $E'$;
\item 
the {\em property} (d) (briefly, $E\in\text{\rm (d)}$) if
$(|f_n|)$ is $\text{\rm w}^\ast$-null for each disjoint 
$\text{\rm w}^\ast$-null sequence $(f_n)$ in $E'$ \cite{El,Wnuk3};
\item
the bi-{\em sequence property} (briefly, $E\in\text{\rm (bi-sP)}$)
if $f_n(x_n)\to 0$ for each \text{\rm w}$^\ast$-null 
$(f_n)$ in $E'_+$ and each disjoint $\text{\rm w}$-null $(x_n)$ in $E$ \cite[Def.3.1]{AEW};
\item 
the {\em strong} \text{\rm GP}-{\em property} (briefly, $E\in\text{\rm (s-GPP)}$)
if each almost limited subset of $E$ is relatively compact;
\item 
the {\em strong} \text{\rm BD}-{\em property} (briefly, $E\in\text{\rm (s-BDP)}$)
if each almost limited subset of $E$ is relatively \text{\rm w}-compact.
\end{enumerate}}
\end{definition}
\noindent
It is well known that $\text{\rm (PSP)}=\text{\rm (PDSP)}=\text{\rm (DSP)}$. 
Indeed, $\text{\rm (PSP)}\subseteq\text{\rm (PDSP)}$ holds trivially; 
$\text{\rm (PDSP)}\subseteq\text{\rm (DSP)}$ is due to
Assertion \ref{disj w-null is mod w-null}; and, for 
$\text{\rm (DSP)}\subseteq\text{\rm (PSP)}$ see \cite[p.16]{Wnuk1}. 
We include a short proof of the following fact.

\begin{assertion}
{\rm (\cite[Thm.4.2]{AEW}, \cite[Prop.2.4]{Wnuk3})}
{\em Let $E$ be a Banach lattice. The following are equivalent$:$
\begin{enumerate}[{\rm (i)}]
\item
$E\in\text{\rm (bi-sP)}$;
\item 
$E\in\text{\rm (Pbi-sP)}$, in the sense that if $f_n(x_n)\to 0$ for each \text{\rm w}$^\ast$-null 
$(f_n)$ in $E'_+$ and each disjoint $\text{\rm w}$-null $(x_n)$ in $E_+$.
\item 
every $\text{\rm w}^\ast$-null sequence $(f_n)$ in $E'_+$ is uniformly null on each 
dis\-joint \text{\rm w}-null $(x_n)$ in $E_+$.
\end{enumerate}}
\end{assertion}

\begin{proof}
The implication i)$\Longrightarrow$ii) is obvious, whereas
ii)$\Longrightarrow$iii) follows from 
Proposition \ref{uniform convergence on A}~i).

iii)$\Longrightarrow$i) Let $(f_n)$ be \text{\rm w}$^\ast$-null in $E'_+$
and $(x_n)$ be disjoint $\text{\rm w}$-null in $E$. 
By Assertion \ref{disj w-null is mod w-null}, $(x_n^\pm)$ are both
disjoint $\text{\rm w}$-null in $E_+$. Then $(f_n)$ is uniformly null 
on both $(x_n^\pm)$, and hence on $(x_n)=(x_n^+)-(x_n^-)$. 
By Proposition \ref{uniform convergence on A}~i), $f_n(x_n)\to 0$,
as desired.
\end{proof}

\subsection{}
Applying the redistribution between the domain and range
as in Definition \ref{Main Schur affiliated operators} to
properties of Definition \ref{Schur property}, we obtain the
correspondent affiliated operators.

\begin{definition}\label{affiliated operators EtoY} 
{\em An operator $T:E\to Y$ is called:
\begin{enumerate}[a)]
\item 
a \text{\rm [PSP]}-{\em operator} if $\|Tx_n\|\to 0$ for each
\text{\rm w}-null $(x_n)$ in $E_+$;
\item 
a \text{\rm [PDSP]}-{\em operator} if $\|Tx_n\|\to 0$ for each
disjoint \text{\rm w}-null $(x_n)$ in $E_+$;
\item 
a \text{\rm [DSP]}-{\em operator} if $\|Tx_n\|\to 0$ for each
disjoint \text{\rm w}-null $(x_n)$ in $E$;
\item 
an \text{\rm [s-GPP]}-{\em operator} if $T$ carries
almost limited subsets of $E$ onto relatively compact subsets of $Y$;
\item 
an \text{\rm [s-BDP]}-{\em operator} if $T$ carries
almost limited subsets of $E$ onto relatively \text{\rm w}-compact subsets of $Y$.
\end{enumerate}}
\end{definition}

\noindent
Clearly, 
\begin{equation}\label{s-GPP and s-BDP op1}
\text{\rm [s-GPP]}(E,Y)\subseteq\text{\rm [GPP]}(E,Y)\bigcap\text{\rm [s-BDP]}(E,Y)
\ \ \text{\rm and} 
\end{equation}
\begin{equation}\label{s-GPP and s-BDP op2}
\text{\rm [s-BDP]}(E,Y)\subseteq\text{\rm [BDP]}(E,Y).
\end{equation}
\noindent
\text{\rm [DSP]}-operators coincide with the almost Dunford--Pettis operators, 
and hence, by \cite[Thm.2.2]{AE}, 
\begin{equation}\label{PSP=PDSP=DSP}
\text{\rm [PSP]}(E,Y)=\text{\rm [PDSP]}(E,Y)=\text{\rm [DSP]}(E,Y).
\end{equation}

\begin{definition}\label{affiliated operators XtoF} 
{\em An operator $T:X\to F$ is called:
\begin{enumerate}[a)]
\item 
a \text{\rm [DPSP]}-{\em operator} if $\|T'f_n\|\to 0$ 
for each \text{\rm w}$^\ast$-null $(f_n)$ in $F_+'$;
\item 
a \text{\rm [DDSP]}-{\em operator} if $\|T'f_n\|\to 0$
for each disjoint \text{\rm w}$^\ast$-null $(f_n)$ in $F'$;
\item 
a \text{\rm [PGP]}-{\em operator} if $(T'f_n)$ is \text{\rm w}-null
for each \text{\rm w}$^\ast$-null $(f_n)$ in $F_+'$;
\item 
a \text{\rm [DGP]}-{\em operator} if $(T'f_n)$ is \text{\rm w}-null
for each disjoint \text{\rm w}$^\ast$-null $(f_n)$ in $F'$;
\item 
an \text{\rm [swl]}-{\em operator} if $(|Tx_n|)$ is \text{\rm w}-null
for each \text{\rm w}-null $(x_n)$ in $X$.
\end{enumerate}}
\end{definition}
\noindent
\text{\rm [DDSP]}-operators coincide with the almost limited
operators, whereas \text{\rm [DGP]}-operators agree with the almost Grothendieck
operators.

\begin{proposition}\label{duals 1}
$(\text{\rm [DPSP]}(X,F))'\cup(\text{\rm [DDSP]}(X,F))'\subseteq\text{\rm [PSP]}(F',X')$.
\end{proposition}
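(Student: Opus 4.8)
The goal is to show that, for $T$ belonging to either of the two classes, the adjoint $T':F'\to X'$ is a $\text{\rm [PSP]}$-operator. Since $F'$ is a Banach lattice and $X'$ a Banach space, this means precisely that $\|T'f_n\|\to 0$ for every $\text{\rm w}$-null sequence $(f_n)$ in $F'_+$. I would begin with two elementary observations: in a dual space $\text{\rm w}$-convergence implies $\text{\rm w}^\ast$-convergence, and disjointness of a sequence is an order condition that does not involve any topology. Consequently a $\text{\rm w}$-null sequence in $F'_+$ is $\text{\rm w}^\ast$-null in $F'_+$, and a disjoint $\text{\rm w}$-null sequence in $F'_+$ is a disjoint $\text{\rm w}^\ast$-null sequence both in $F'_+$ and in $F'$.

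The core of the proof is a single claim that covers both classes at once: if $T\in\text{\rm [DPSP]}(X,F)\cup\text{\rm [DDSP]}(X,F)$, then $\|T'f_n\|\to 0$ for every disjoint $\text{\rm w}^\ast$-null sequence $(f_n)$ in $F'_+$. If $T$ is a $\text{\rm [DPSP]}$-operator this is immediate, since the defining condition governs \emph{all} $\text{\rm w}^\ast$-null sequences in $F'_+$, disjoint or not. If $T$ is a $\text{\rm [DDSP]}$-operator it is again immediate, since such an $(f_n)$ is in particular a disjoint $\text{\rm w}^\ast$-null sequence in $F'$, which is exactly what the $\text{\rm [DDSP]}$ condition controls.

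To conclude I would invoke the identity $\text{\rm [PSP]}(F',X')=\text{\rm [PDSP]}(F',X')$ recorded in (\ref{PSP=PDSP=DSP}), which is legitimate because $F'$ is a Banach lattice and $X'$ a Banach space. It therefore suffices to verify that $T'$ is a $\text{\rm [PDSP]}$-operator, i.e.\ that $\|T'f_n\|\to 0$ for every disjoint $\text{\rm w}$-null sequence $(f_n)$ in $F'_+$. By the first paragraph such a sequence is disjoint $\text{\rm w}^\ast$-null in $F'_+$, so the claim applies and yields $\|T'f_n\|\to 0$. Hence $T'\in\text{\rm [PSP]}(F',X')$, as required.

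I do not expect a genuine obstacle; the argument is essentially bookkeeping of definitions together with the elementary facts recalled above. The one point deserving care is that, for the $\text{\rm [DDSP]}$ half, one should not try to handle arbitrary (possibly non-disjoint) $\text{\rm w}$-null sequences in $F'_+$ directly — the $\text{\rm [DDSP]}$ hypothesis says nothing about them — and it is precisely the reduction $\text{\rm [PSP]}=\text{\rm [PDSP]}$ that circumvents this. It is also worth noting that the union in the statement is not redundant: the terms of a disjoint $\text{\rm w}^\ast$-null sequence need not be positive, and passing to positive parts can destroy $\text{\rm w}^\ast$-nullity (cf.\ the $c'$ example following Assertion \ref{disj w-null is mod w-null}), so $\text{\rm [DPSP]}$-operators need not be $\text{\rm [DDSP]}$-operators in general.
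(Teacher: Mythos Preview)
Your proof is correct and follows essentially the same route as the paper: start with a disjoint \text{\rm w}-null sequence in $F'_+$, observe it is disjoint $\text{\rm w}^\ast$-null in $F'_+$ so that either hypothesis forces $\|T'f_n\|\to 0$, conclude $T'\in\text{\rm [PDSP]}(F',X')$, and then apply (\ref{PSP=PDSP=DSP}). The only difference is expository --- you add commentary on why the reduction to $\text{\rm [PDSP]}$ is needed and why the union is not redundant --- but the mathematical content is the same.
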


\begin{proof}
Let $(f_n)$ be disjoint \text{\rm w}-null in $F'_+$. 
Then $(f_n)$ is disjoint \text{\rm w}$^\ast$-null in $F'_+$.
If $T\in\text{\rm [DPSP]}(X,F)$ or $T\in\text{\rm [DDSP]}(X,F)$ then
in both cases $\|T'f_n\|\to 0$. Thus $T'\in\text{\rm [PDSP]}(F',X')$,
and hence $T'\in\text{\rm [PSP]}(F',X')$ by (\ref{PSP=PDSP=DSP}).
\end{proof}

\begin{definition}\label{affiliated operators EtoF} 
{\em An operator $T:E\to F$ is called:
\begin{enumerate}[a)]
\item 
a \text{\rm [dswl]}-{\em operator} if $(|Tx_n|)$ is \text{\rm w}-null
for each disjoint \text{\rm w}-null $(x_n)$;
\item 
an $\text{\rm [sw}^\ast\text{\rm l]}$-{\em operator} 
if $(|T'f_n|)$ is \text{\rm w}$^\ast$-null
for each \text{\rm w}$^\ast$-null $(f_n)$ in $F'$;
\item 
a \text{\rm [d]}-{\em operator} if $(|T'f_n|)$ is \text{\rm w}$^\ast$-null
for each disjoint \text{\rm w}$^\ast$-null $(f_n)$ in $F'$;
\item 
a \text{\rm [bi-sP]}-{\em operator} if $f_n(Tx_n)\to 0$ for each \text{\rm w}$^\ast$-null 
$(f_n)$ in $F'_+$ and each disjoint $\text{\rm w}$-null $(x_n)$ in $E$;
\item 
a \text{\rm [Pbi-sP]}-{\em operator} if $f_n(Tx_n)\to 0$ for each \text{\rm w}$^\ast$-null 
$(f_n)$ in $F'_+$ and each disjoint $\text{\rm w}$-null $(x_n)$ in $E_+$.
\end{enumerate}}
\end{definition}

\begin{proposition}\label{(d) and sw*l}
{\em For a Banach lattice $F$ the following are hold.
\begin{enumerate}[i)]
\item 
$F\in\text{\rm (d)}$ iff $\text{\rm r-[d]}(E,F)=\text{\rm L}_r(E,F)$ for every $E$.
\item 
$F'$ has sequentially $\text{\rm w}^\ast$-continuous lattice operations iff\\ 
$\text{\rm [sw}^\ast\text{\rm l]}(E,F)=\text{\rm L}_r(E,F)$ for every $E$.
\end{enumerate}}
\end{proposition}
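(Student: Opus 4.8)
The plan is to prove both equivalences by the same two-sided argument: the ``if'' direction follows by specializing to $E=F$ and testing against the identity operator $I_F$, which by Definition \ref{Main affiliated property} converts the operator statement into the defining property of $F$; the ``only if'' direction amounts to checking that the relevant property of $F$, applied pointwise along a test sequence, propagates through an arbitrary regular operator $T:E\to F$, using the decomposition $T=T_1-T_2$ with $T_1,T_2\in\mathrm{L}_+(E,F)$.

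For part i), first I would observe that $\mathrm{r\text{-}[d]}(E,F)\subseteq\mathrm{L}_r(E,F)$ always holds by definition, so the content is the reverse inclusion. Assume $F\in(\mathrm d)$ and take $T\in\mathrm{L}_r(E,F)$, $T=T_1-T_2$ with $T_i\in\mathrm{L}_+(E,F)$; it suffices to show each positive operator $T_i$ is a $[\mathrm d]$-operator. So let $S:E\to F$ be positive and let $(f_n)$ be disjoint $\mathrm w^\ast$-null in $F'$. Then $S'f_n\in E'$, and I claim $(|S'f_n|)$ is $\mathrm w^\ast$-null. The key point is that for positive $S$ one has $|S'f_n|\le S'|f_n|$ (since $S'$ is positive, $\pm S'f_n=S'(\pm f_n)\le S'|f_n|$). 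By $F\in(\mathrm d)$, $(|f_n|)$ is $\mathrm w^\ast$-null in $F'$, hence $(S'|f_n|)$ is $\mathrm w^\ast$-null in $E'_+$ by $\mathrm w^\ast$-continuity of the positive operator $S':F'\to E'$; and a sequence dominated in modulus by a $\mathrm w^\ast$-null positive sequence is $\mathrm w^\ast$-null (test against $x\in E_+$: $0\le |S'f_n|(x)\le (S'|f_n|)(x)\to 0$, then split a general $x$ into $x^+-x^-$). Thus $S$ is a $[\mathrm d]$-operator, so $T\in\mathrm{r\text{-}[d]}(E,F)$. Conversely, if $\mathrm{r\text{-}[d]}(E,F)=\mathrm{L}_r(E,F)$ for every $E$, take $E=F$: then $I_F\in\mathrm{L}_r(F,F)=\mathrm{r\text{-}[d]}(F,F)$, in particular $I_F$ is a $[\mathrm d]$-operator, i.e. $(|f_n|)=(|I_F'f_n|)$ is $\mathrm w^\ast$-null for every disjoint $\mathrm w^\ast$-null $(f_n)$, which is exactly $F\in(\mathrm d)$.

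Part ii) runs in complete parallel, with ``disjoint $\mathrm w^\ast$-null'' replaced by ``$\mathrm w^\ast$-null'' throughout and $(\mathrm d)$ replaced by the statement that $F'$ has sequentially $\mathrm w^\ast$-continuous lattice operations (equivalently $F\in(\mathrm{sw}^\ast\mathrm l)$ in the notation of Definition \ref{Schur property}); note here one must again first remark that every $[\mathrm{sw}^\ast\mathrm l]$-operator is regular -- indeed $(|T'f_n|)$ being $\mathrm w^\ast$-null forces, among other things, $\|T'f_n\|$-boundedness and in particular $T'$, hence $T$, maps into a space where the modulus is under control; more directly, the statement $[\mathrm{sw}^\ast\mathrm l](E,F)=\mathrm L_r(E,F)$ is to be read as an equality of sets inside $\mathrm L_r(E,F)$, so again only $\mathrm L_r(E,F)\subseteq[\mathrm{sw}^\ast\mathrm l](E,F)$ needs proof, and the argument of part i) applies verbatim: for positive $S$ and $\mathrm w^\ast$-null $(f_n)$ in $F'$, $(|f_n|)$ is $\mathrm w^\ast$-null by hypothesis on $F'$, hence $(S'|f_n|)$ is, hence $(|S'f_n|)$ is by domination. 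The converse is again the $E=F$, $T=I_F$ specialization.

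I expect the only genuinely delicate point to be the very first sentence of each part: confirming that $\mathrm{r\text{-}[d]}(E,F)$ and $[\mathrm{sw}^\ast\mathrm l](E,F)$ are subsets of $\mathrm L_r(E,F)$ so that the asserted set equalities make sense and reduce to one inclusion. For $\mathrm{r\text{-}[d]}$ this is immediate from Definition \ref{rP-operators}. For $[\mathrm{sw}^\ast\mathrm l]$ one should note that the proposition is implicitly asserting both that such operators are automatically regular and that, conversely, every regular operator has the property; the clean way is to phrase it as: ``$T\in[\mathrm{sw}^\ast\mathrm l](E,F)$ and $T$ is regular'' on the left -- but if the intended reading is simply $\mathrm L_r(E,F)\subseteq[\mathrm{sw}^\ast\mathrm l](E,F)$ together with the trivial reverse containment under the standing regularity hypothesis, then the proof above is complete. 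Everything else is the routine ``modulus domination plus positivity of the adjoint'' computation sketched above, together with the pervasive trick of testing $\mathrm w^\ast$-convergence first on $E_+$ and then splitting $x=x^+-x^-$.
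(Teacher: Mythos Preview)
Your proof is correct and follows essentially the same route as the paper: for the necessity you reduce to positive operators and use the domination $|S'f_n|\le S'|f_n|$ together with the hypothesis on $F$ to push $\mathrm{w}^\ast$-nullness through $S'$, and for the sufficiency you specialize to $E=F$ and $T=I_F$; part ii) is then obtained by dropping the disjointness assumption. Your closing remark about the containment $[\mathrm{sw}^\ast\mathrm l](E,F)\subseteq\mathrm L_r(E,F)$ is a legitimate observation---the paper's proof in fact only establishes $\mathrm L_r(E,F)\subseteq[\mathrm{sw}^\ast\mathrm l](E,F)$ and the $I_F$ direction, so the stated equality in ii) should indeed be read with your caveat (or with $\text{r-}[\mathrm{sw}^\ast\mathrm l]$ in place of $[\mathrm{sw}^\ast\mathrm l]$).
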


\begin{proof}
i) For the necessity, let $E$ be a Banach lattice.
It is enough to prove $\text{\rm L}_+(E,F)\subseteq\text{\rm [d]}(E,F)$.
So, let $0\le T:E\to F$ and $(f_n)$ be disjoint $\text{\rm w}^\ast$-null
in $F'$. Since $F\in\text{\rm (d)}$ then $(|f_n|)$ is $\text{\rm w}^\ast$-null,
and then $(T'|f_n|)$ is $\text{\rm w}^\ast$-null in $E'$. It follows from
$|T'f_n|\le T'|f_n|$ that $(|T'f_n|)$ is $\text{\rm w}^\ast$-null,
and hence $T\in\text{\rm [d]}(E,F)$.
The sufficiency is immediate since $I_F\in\text{\rm [d]}(F,F)$ implies $F\in \text{\rm (d)}$.\\
ii) Just remove the disjointness condition on $(f_n)$ in the proof of i).
\end{proof}
\noindent
The next proposition shows that \text{\rm [Pbi-sP]}-operators 
agree with \text{\rm [bi-sP]}-opeators.

\begin{proposition}\label{bi-sP=Pbi-sP}
{\em $\text{\rm [bi-sP]}(E,F)=\text{\rm [Pbi-sP]}(E,F)$.}
\end{proposition}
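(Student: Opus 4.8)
The plan is to prove the nontrivial inclusion $\text{\rm [bi-sP]}(E,F)\subseteq\text{\rm [Pbi-sP]}(E,F)$; the reverse inclusion being immediate, since the defining condition for a \text{\rm [bi-sP]}-operator ranges over all disjoint \text{\rm w}-null $(x_n)$ in $E$ while the \text{\rm [Pbi-sP]}-condition only restricts to $(x_n)$ in $E_+$. So the work is to show that a \text{\rm [Pbi-sP]}-operator automatically handles arbitrary disjoint \text{\rm w}-null sequences. This should mirror the proof of the equivalence i)$\Longleftrightarrow$ii) in the earlier assertion (the characterization of \text{\rm (bi-sP)} as \text{\rm (Pbi-sP)}), but now carried out at the operator level rather than for $I_E$.

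First I would take $T\in\text{\rm [Pbi-sP]}(E,F)$, fix a \text{\rm w}$^\ast$-null sequence $(f_n)$ in $F'_+$ and a disjoint \text{\rm w}-null sequence $(x_n)$ in $E$, and decompose $x_n=x_n^+-x_n^-$. By Assertion \ref{disj w-null is mod w-null}, $(|x_n|)$ is \text{\rm w}-null, and since $0\le x_n^\pm\le|x_n|$ both $(x_n^+)$ and $(x_n^-)$ are \text{\rm w}-null sequences in $E_+$; moreover each is disjoint because the $x_n$ are pairwise disjoint (so $x_n^+\wedge x_m^+\le|x_n|\wedge|x_m|=0$ for $n\ne m$, and likewise for the negative parts). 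Hence both $(x_n^+)$ and $(x_n^-)$ are disjoint \text{\rm w}-null sequences in $E_+$, so the \text{\rm [Pbi-sP]}-hypothesis gives $f_n(Tx_n^+)\to 0$ and $f_n(Tx_n^-)\to 0$. Subtracting, $f_n(Tx_n)=f_n(Tx_n^+)-f_n(Tx_n^-)\to 0$, which is exactly the \text{\rm [bi-sP]}-condition for $T$.

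Alternatively, and perhaps more cleanly, I would derive this from the already-proved set-version: apply the equivalence i)$\Longleftrightarrow$iii) of the earlier assertion (or Assertion \ref{uniform convergence on A}~i)) to the \text{\rm w}$^\ast$-null $(f_n)$ in $F'_+$ and the two disjoint \text{\rm w}-null sequences $(Tx_n^\pm)$ — wait, that does not quite apply since those live in $F$, not in $F_+$ necessarily, so the direct subsequence/splitting argument of the previous paragraph is the safer route. I do not anticipate a genuine obstacle here: the statement is essentially a routine transcription of the \text{\rm (bi-sP)}$=$\text{\rm (Pbi-sP)} equivalence, with the only point requiring care being the verification that the positive and negative parts of a disjoint sequence remain disjoint, which follows from $|x_n|\wedge|x_m|=0$ together with $x_n^\pm\le|x_n|$ and the lattice identity that a lattice homomorphism is not needed — just monotonicity of $\wedge$. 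I would present the argument in two short lines exactly as in the first paragraph above.

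\begin{proof}
The inclusion $\text{\rm [bi-sP]}(E,F)\subseteq\text{\rm [Pbi-sP]}(E,F)$ is trivial.
Conversely, let $T\in\text{\rm [Pbi-sP]}(E,F)$, let $(f_n)$ be \text{\rm w}$^\ast$-null
in $F'_+$, and let $(x_n)$ be disjoint \text{\rm w}-null in $E$. By
Assertion \ref{disj w-null is mod w-null}, $(|x_n|)$ is \text{\rm w}-null; since
$0\le x_n^\pm\le|x_n|$, both $(x_n^+)$ and $(x_n^-)$ are \text{\rm w}-null in $E_+$,
and they are disjoint because $x_n^\pm\wedge x_m^\pm\le|x_n|\wedge|x_m|=0$ for
$n\ne m$. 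The \text{\rm [Pbi-sP]}-hypothesis yields $f_n(Tx_n^+)\to 0$ and
$f_n(Tx_n^-)\to 0$, whence $f_n(Tx_n)=f_n(Tx_n^+)-f_n(Tx_n^-)\to 0$. Thus
$T\in\text{\rm [bi-sP]}(E,F)$.
\end{proof}
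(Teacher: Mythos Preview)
Your proof is correct. The paper's argument differs slightly: instead of splitting $x_n=x_n^+-x_n^-$ and applying the \text{\rm [Pbi-sP]}-hypothesis twice, it applies the hypothesis once to the disjoint \text{\rm w}-null sequence $(|x_n|)$ in $E_+$ and then invokes the inequality $|f_n(Tx_n)|\le f_n(T|x_n|)$ to conclude. That inequality, however, tacitly requires $T$ to be positive (it relies on $|Tx_n|\le T|x_n|$), which is not part of the assumption; your decomposition via $x_n^\pm$ avoids this issue entirely and works for arbitrary $T\in\text{\rm L}(E,F)$. In this sense your route is the cleaner one, and it also parallels exactly the paper's own proof of the underlying space-level equivalence $\text{\rm (bi-sP)}\Leftrightarrow\text{\rm (Pbi-sP)}$.
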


\begin{proof}
Clearly, $\text{\rm [bi-sP]}(E,F)\subseteq\text{\rm [Pbi-sP]}(E,F)$.
Let $T\in\text{\rm [Pbi-sP]}(E,F)$, $(f_n)$ be \text{\rm w}$^\ast$-null 
in $F'_+$, and $(x_n)$ be disjoint $\text{\rm w}$-null in $E$. 
By Assertion \ref{disj w-null is mod w-null}, 
$(|x_n|)$ is disjoint \text{\rm w}-null in $E$. 
Since $T\in\text{\rm [Pbi-sP]}(E,F)$, $f_n(T|x_n|)\to 0$.
It follows from $|f_n(Tx_n)|\le f_n(T|x_n|)$ that $f_n(Tx_n)\to 0$, 
and hence $T\in\text{\rm [bi-sP]}(E,F)$.
\end{proof}

\begin{proposition}\label{(d) ve ao-lim}
{\em Let $T\in\text{\rm L}(E,F)$. The following holds.
\begin{enumerate}[{\rm i)}]
\item
$T$ is a $\text{\rm [d]}$-operator iff $T$ is almost $\text{\rm o}$-limited.
\item 
$T$ is an $\text{\rm [sw}^\ast\text{\rm l]}$-operator 
iff $T$ is $\text{\rm o}$-limited.
\end{enumerate}}
\end{proposition}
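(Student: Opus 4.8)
The plan is to unwind both definitions to the level of order intervals and disjoint $\mathrm{w}^\ast$-null sequences in $F'$, and show the two descriptions coincide term by term. Recall from Definition \ref{a-o-limited operator}f) that $T$ is almost $\mathrm{o}$-limited iff for every $x\in E_+$ and every disjoint $\mathrm{w}^\ast$-null $(f_n)$ in $F'$, the sequence $(T'f_n)$ is uniformly null on $[0,x]$; and $T$ is a $\mathrm{[d]}$-operator iff $(|T'f_n|)$ is $\mathrm{w}^\ast$-null for every disjoint $\mathrm{w}^\ast$-null $(f_n)$ in $F'$. So for i) I would fix such an $(f_n)$ and establish the chain: $(|T'f_n|)$ is $\mathrm{w}^\ast$-null $\iff$ $|T'f_n|(x)\to 0$ for all $x\in E_+$ $\iff$ $\sup_{0\le y\le x}|(T'f_n)(y)|\to 0$ for all $x\in E_+$ $\iff$ $(T'f_n)$ is uniformly null on $[0,x]$ for all $x\in E_+$. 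The first equivalence is just the definition of $\mathrm{w}^\ast$-convergence of positive functionals tested on the positive cone (a $\mathrm{w}^\ast$-null sequence of positive functionals is exactly one converging to $0$ on $E_+$, since $E=E_+-E_+$). The middle equivalence is the standard lattice identity $|g|(x)=\sup\{g(y):|y|\le x\}=\sup\{|g(y)|: 0\le y\le x\}$ valid for $g\in E'$, $x\in E_+$; here $g=T'f_n$. The last equivalence is the definition of uniform nullity on the order interval $[0,x]$.

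For part ii), the argument is identical after deleting the word ``disjoint'': an operator $T$ is $\mathrm{o}$-limited (Definition \ref{a-o-limited operator}e)) iff $(T'f_n)$ is uniformly null on every $[0,x]\subseteq E_+$ for every $\mathrm{w}^\ast$-null $(f_n)$ in $F'$, and $T$ is an $\mathrm{[sw}^\ast\mathrm{l]}$-operator iff $(|T'f_n|)$ is $\mathrm{w}^\ast$-null for every $\mathrm{w}^\ast$-null $(f_n)$ in $F'$; the same chain of equivalences (now with $(f_n)$ merely $\mathrm{w}^\ast$-null) does the job. I would simply write ``The proof of ii) is obtained from that of i) by removing the disjointness requirement on $(f_n)$,'' mirroring the style already used in the proof of Proposition \ref{(d) and sw*l}.

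I do not anticipate a serious obstacle; the only point requiring a little care is the lattice-theoretic identity $|g|(x)=\sup_{0\le y\le x}|g(y)|$ for $g\in E'$ and $x\in E_+$, which lets one pass between ``$(|T'f_n|)$ is $\mathrm{w}^\ast$-null'' and ``$(T'f_n)$ is uniformly null on order intervals.'' This is a routine fact about the modulus in the order dual $E^\sim=E'$ (it is the Riesz–Kantorovich formula specialized to an order interval), and I would cite it as standard, e.g.\ from \cite{AlBu} or \cite{Mey}. Everything else is bookkeeping with the definitions collected in Definition \ref{a-o-limited operator} and in the ``sequentially $\mathrm{w}^\ast$-continuous lattice operations'' terminology recalled just before Assertion \ref{KM}.
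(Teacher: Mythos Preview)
Your proposal is correct and follows essentially the same route as the paper: both arguments hinge on the Riesz--Kantorovich formula to pass between ``$|T'f_n|(x)\to 0$ for all $x\in E_+$'' and ``$(T'f_n)$ is uniformly null on order intervals,'' and both dispose of part~ii) by simply deleting the disjointness hypothesis on $(f_n)$.

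One small correction: the identity you state, $|g|(x)=\sup\{|g(y)|:0\le y\le x\}$, is not true in general (e.g.\ $E=\mathbb{R}^2$, $g=(1,-1)$, $x=(1,1)$ gives $|g|(x)=2$ but the supremum over $[0,x]$ is $1$). The correct Riesz--Kantorovich formula reads $|g|(x)=\sup\{|g(y)|:|y|\le x\}$, with the supremum taken over $[-x,x]$, and this is precisely the version the paper uses. Your middle equivalence is nonetheless valid, since $\sup_{|y|\le x}|g(y)|$ and $\sup_{0\le y\le x}|g(y)|$ are comparable within a factor of~$2$ (decompose $y=y^+-y^-$), or alternatively because $[-x,x]$ is a translate of $[0,2x]$; just make sure to state the formula correctly.
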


\begin{proof}
i) For the necessity, let $T\in\text{\rm [d]}(E,F)$. Suppose
$x\in E_+$ and $(f_n)$ is disjoint $\text{\rm w}^\ast$-null in $F'$.
By the assumption, $(|T'f_n|)$ is \text{\rm w}$^\ast$-null,
and hence $|T'f_n|x\to 0$. By the Riesz--Kantorovich formula,
$|T'f_n|x=\sup\{|(T'f_n)y|: |y|\le x\}\to 0$, and hence
$(T'f_n)$ is uniformly null on each $[0,x]$. 
Thus $T\in\text{\rm a-o-Lm}(E,F)$.

For the sufficiency, let $T\in\text{\rm a-o-Lm}(E,F)$.
Suppose $(f_n)$ is disjoint $\text{\rm w}^\ast$-null in $F'$.
In order to prove $T\in\text{[d]}(E,F)$, we need 
to show that $(|T'f_n|)\stackrel{\text{\rm w}^\ast}{\to}0$.
It is enough to prove that $|T'f_n|x\to 0$ for each $x\in E_+$.
Let $x\in E_+$. By the assumption, $\sup\{|(T'f_n)y|: |y|\le x\}\to 0$.
Therefore, the Riesz--Kantorovich formula implies
$|T'f_n|x\to 0$, and hence $T\in\text{[d]}(E,F)$.

ii) Just remove the disjointness condition on $(f_n)$ in the proof of i).
\end{proof}

\subsection{}
Affiliated operators from the previous subsection form
vector spaces, which are complete under the operator norm;
the details are included in the next lemma.

\begin{lemma}\label{complete in the operator norm}
{\em The following sets of affiliated operators are vector spaces 
which are complete in the operator norm.
\begin{enumerate}[{\rm i)}]
\item 
$\text{\rm [PSP]}(E,Y)$.
\item 
$\text{\rm [DPSP]}(X,F)$ and $\text{\rm [DDSP]}(X,F)$.
\item
$\text{\rm [PGP]}(X,F)$ and $\text{\rm [DGP]}(X,F)$.
\item
$\text{\rm [swl]}(X,F)$ and $\text{\rm [dswl]}(E,F)$.
\item
$\text{\rm [sw}^\ast\text{\rm l]}(E,F)$ and $\text{\rm [d]}(E,F)$.
\item
$\text{\rm [bi-sP]}(E,F)$.
\item
$\text{\rm [GPP]}(X,Y)$ and $\text{\rm [s-GPP]}(E,Y)$. 
\item
$\text{\rm [BDP]}(X,Y)$ and $\text{\rm [s-BDP]}(E,Y)$.
\end{enumerate}}
\end{lemma}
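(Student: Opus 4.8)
\emph{Plan.} I would dispose of the vector space claim first, since it is routine in every case: all the sequence conditions in Definitions~\ref{affiliated operators EtoY}, \ref{affiliated operators XtoF}, \ref{affiliated operators EtoF} are visibly stable under sums and scalar multiples, the passage to adjoints used in (ii), (iii), (v) is harmless because $S\mapsto S'$ is linear and isometric, and for the conditions phrased through $(|\cdot|)$ one observes that $\||Tx_n|-|Sx_n|\|\le\|(T-S)x_n\|$ (and, in $E'$, $|\,|T'f_n|-|S'f_n|\,|\le|(T'-S')f_n|$) so that a linear combination of two such operators again qualifies; for (vii)--(viii) one uses $(T+S)(A)\subseteq T(A)+S(A)$, $(\lambda T)(A)=\lambda T(A)$, and the fact that a sum of two relatively (weakly) compact sets is relatively (weakly) compact. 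For completeness, fix a sequence $(T_k)$ in one of the classes that is Cauchy in the operator norm; since $\text{\rm L}(X,Y)$, $\text{\rm L}(E,F)$, and (for (ii), via $\|S'\|=\|S\|$) $\text{\rm L}(F',X')$ are Banach spaces, $T_k\to T$ in operator norm for some bounded $T$, and everything reduces to showing $T$ lies in the class.

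For the classes (i)--(vi) I would use a single uniform ``three-$\varepsilon$'' estimate. Let $(u_n)$ be the relevant test sequence (weakly null, disjoint weakly null, or weak$^\ast$-null in the appropriate space), so $M:=\sup_n\|u_n\|<\infty$ by the uniform boundedness principle. The quantity $\Phi_n$ that must tend to $0$ is one of $\|Tu_n\|$ (or $\|T'f_n\|$), a value $\langle\varphi, |Tx_n|\rangle$ or $\langle x,|T'f_n|\rangle$, a value $\langle\varphi, T u_n\rangle$, or $f_n(Tu_n)$; in each case split $\Phi_n$ into a tail coming from $T-T_k$ and a term coming from $T_k$. The tail is bounded by $\|T-T_k\|M$ (times $\|\varphi\|$ or $\|x\|$ in the weak/weak$^\ast$-valued cases), hence $<\varepsilon/2$ once $k$ is large; with that $k$ fixed, the $T_k$-term is $<\varepsilon/2$ for all large $n$ because $T_k$ is in the class. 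Two small points: in the weak and weak$^\ast$ cases the admissible $k$ may depend on the fixed functional, which is legitimate since those convergences are checked functional by functional; and in the $(|\cdot|)$-cases one first applies $\|\,|Tx_n|-|T_kx_n|\,\|\le\|(T-T_k)x_n\|$ (resp. $|\,|T'f_n|-|T_k'f_n|\,|\le|(T'-T_k')f_n|$) to reduce to the preceding situation.

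For (vii)--(viii) the test object is a limited $A\subseteq X$ (an almost limited $A\subseteq E$ for the ``s-'' variants); put $M:=\sup_{a\in A}\|a\|$, so $T(A)\subseteq T_k(A)+\|T-T_k\|M\,B_Y$ for every $k$. In the $\text{\rm [GPP]}$/$\text{\rm [s-GPP]}$ cases each $T_k(A)$ is totally bounded, and choosing $k$ with $\|T-T_k\|M<\varepsilon/3$ one converts a finite $(\varepsilon/3)$-net of $T_k(A)$ into a finite $\varepsilon$-net of $T(A)$, so $T(A)$ is totally bounded, hence relatively compact. In the $\text{\rm [BDP]}$/$\text{\rm [s-BDP]}$ cases I would use the bidual characterization: a bounded $C\subseteq Y$ is relatively weakly compact iff its weak$^\ast$-closure in $Y''$ is contained in $Y$. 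From $T(A)\subseteq T_k(A)+\varepsilon_kB_Y$ with $\varepsilon_k:=\|T-T_k\|M\to0$, Banach--Alaoglu gives $\overline{T(A)}^{\,w^\ast}\subseteq\overline{T_k(A)}^{\,w^\ast}+\varepsilon_kB_{Y''}\subseteq Y+\varepsilon_kB_{Y''}$ (the last step because $T_k(A)$ is relatively weakly compact, so its weak$^\ast$-closure in $Y''$ sits inside $Y$), and intersecting over $k$, together with $Y$ being norm-closed in $Y''$, forces $\overline{T(A)}^{\,w^\ast}\subseteq Y$.

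The main obstacle is precisely this last case: relative weak compactness, unlike norm, weak, and weak$^\ast$ convergence of sequences, is not detected by a uniform estimate against a single functional, so the ``three-$\varepsilon$'' mechanism does not apply directly and one has to pass through the bidual (equivalently, invoke the standard fact that a bounded set lying within $\varepsilon$ of some relatively weakly compact set, for every $\varepsilon>0$, is itself relatively weakly compact). The remaining work is purely bookkeeping across the various domains, codomains, and modes of convergence, and I would present it once in a representative case and indicate that the others are identical.
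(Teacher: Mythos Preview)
Your proposal is correct and follows essentially the same approach as the paper's proof: both dismiss the vector-space verification as routine, handle (i)--(vi) by the same three-$\varepsilon$ splitting (the paper, like you, controls the $|\cdot|$-valued cases via $|Tx_n|\le|(T-T_k)x_n|+|T_kx_n|$, and for $\text{\rm [d]}(E,F)$ writes out the Riesz--Kantorovich formula where you invoke $\big|\,|T'f_n|-|T_k'f_n|\,\big|\le|(T'-T_k')f_n|$---equivalent devices), and handle (vii)--(viii) via the inclusion $T(A)\subseteq T_k(A)+\|T-T_k\|M\,B_Y$. The only difference is that for (viii) the paper simply cites Grothendieck's lemma \cite[Thm.~3.44]{AlBu} (a bounded set within $\varepsilon$ of a relatively weakly compact set for every $\varepsilon>0$ is itself relatively weakly compact), whereas you reprove that lemma through the bidual characterization; this is a presentational rather than a mathematical difference.
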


\begin{proof}
We skip trivial checking that all sets of affiliated operators in the lemma
are vector spaces. It remains to show that
each space of affiliated operators under the consideration is a closed
in the operator norm subspace of the correspondent space of all linear operators.
As arguments here are straightforward and standard, we present them in the basic cases.
\begin{enumerate}[{\rm i)}]

\item
Let $\text{\rm [PSP]}(E,Y)\ni T_k\stackrel{\|\cdot\|}{\to}T\in\text{\rm L}(E,Y)$. 
Let $(x_n)$ be $\text{\rm w}$-null in $E_+$. We need to show $\|Tx_n\|\to 0$. 
Let $\varepsilon>0$. Pick some $k\in\mathbb{N}$ with $\|T-T_k\|\le\varepsilon$. 
Since $T_k\in\text{\rm PSP}(E,Y)$, there exists $n_0$ such that
$\|T_k x_n\|\le\varepsilon$ for $n\ge n_0$. Take $M\in\mathbb{R}$ 
satisfying $\|x_n\|\le M$ for all $n\in\mathbb{N}$. Since
$$
   \|Tx_n\| = \|(T -T_k)x_n+T_k x_n\| \le
   \|T -T_k\|\cdot \|x_n\|+\|T_kx_n\|\le \varepsilon(M+1)
$$
for $n\ge n_0$, and since $\varepsilon>0$ is arbitrary, $\|Tx_n\|\to 0$.

\item
As the case of \text{\rm [DDSP]}(X,F) is similar, we confine ourselves to considering
\text{\rm [DPSP]}(X,F).

Let $\text{\rm [DPSP]}(X,F)\ni T_k\stackrel{\|\cdot\|}{\to}T\in\text{\rm L}(X,F)$,
and let $(f_n)$ be $\text{\rm w}^\ast$-null in $F_+'$.
In order to show $(T'f_n)$ is norm null, let $\varepsilon>0$
and pick $k$ with $\|T'-T'_k\|\le\varepsilon$. Since
$T_k\in\text{\rm [DPSP]}(X,F)$, there exists $n_0$ with
$\|T_k'f_n\|\le\varepsilon$ for all $n\ge n_0$.
As $(f_n)$ is $\text{\rm w}^\ast$-null, there exists $M\in\mathbb{R}$
satisfying $\|f_n\|\le M$ for all $n\in\mathbb{N}$. Since 
$$
   \|T'f_n\|\le\|T'f_n-T'_kf_n\|+\|T'_kf_n\|\le
   \|T'_k-T'\|\|f_n\|+\varepsilon\le\varepsilon(M+1)
$$
for $n\ge n_0$. It follows $\|T'f_n\|\to 0$, as desired.

\item 
As the case of \text{\rm [DGP]}(X,F) is similar, we consider 
\text{\rm [PGP]}(X,F) only.

Let $\text{\rm [PGP]}(X,F)\ni T_k\stackrel{\|\cdot\|}{\to}T\in\text{\rm L}(X,F)$,
and let $(f_n)$ be $\text{\rm w}^\ast$-null in $F_+'$.
In order to show that $(T'f_n)$ is \text{\rm w}-null,
pick a $g\in F''$, and let $\varepsilon>0$.
Fix any $k$ with $\|T'-T'_k\|\le\varepsilon$. Since
$T_k\in\text{\rm [PGP]}(X,F)$, there exists $n_0$
with $|g(T_k'f_n)|\le\varepsilon$ for all $n\ge n_0$.
Let $M\in\mathbb{R}$ be such $\|f_n\|\le M$ 
for all $n\in\mathbb{N}$. Because of 
$$
   |g(T'f_n)|\le|g(T'f_n-T_k'f_n)|+|g(T_k'f_n)|\le
$$
$$
   \|g\|\|T'-T_k'\|\|f_n\|+\varepsilon\le(\|g\|M+1)\varepsilon
$$
for $n\ge n_0$, and since $\varepsilon>0$ is arbitrary,
it follows $g(T'f_n)\to 0$. Since $g\in F''$ is arbitrary,
$T\in\text{\rm [PGP]}(X,F)$.

\item 
We consider \text{\rm [swl]}(X,F) only. 
The case of \text{\rm [dswl]}(E,F) is similar.

Let $\text{\rm [swl]}(X,F)\ni T_k\stackrel{\|\cdot\|}{\to}T\in\text{\rm L}(X,F)$ 
and let $(x_n)$ be  $\text{\rm w}$-null in $X$. We need to show 
$|Tx_n|\stackrel{\text{\rm w}}{\to} 0$ in $F$. 
Let $f\in F'$. There exists an $M\in{\mathbb R}$
with $\|x_n\|\le M$ for all $n\in{\mathbb N}$. Take some $\varepsilon>0$ 
and pick  $k\in{\mathbb N}$ with $\|T-T_k\|\le\varepsilon$. 
Choose $n_0$ such that $|f|(|T_kx_n|)\le\varepsilon$ for all $n\ge n_0$. Then
$$
   |f(|Tx_n|)|\le|f|(|(T-T_k)x_n +T_k x_n|)\le
$$
$$
   \|f\|\cdot\|T-T_k\|\cdot M+|f|(|T_k x_n|)\le
   \varepsilon(\|f\|M+1).
$$
Since $\varepsilon>0$ is arbitrary, $f(|Tx_n|)\to 0$; and, since $f\in F'$ is arbitrary,
$|Tx_n|\stackrel{\text{\rm w}}{\to} 0$.

\item
We consider $\text{\rm [d]}(E,F)$ only. 
The case of $\text{\rm [sw}^\ast\text{\rm l]}(E,F)$ is similar.

Let $\text{\rm [d]}(E,F)\ni T_k\stackrel{\|\cdot\|}{\to}T\in\text{\rm L}(E,F)$, 
and let $(f_n)$ be disjoint \text{\rm w}$^\ast$-null in $F'$.
We need to show $(|T'f_n|)\stackrel{\text{\rm w}^\ast}{\to}0$.
It is enough to prove that $|T'f_n|x\to 0$ for each $x\in E_+$.
Let $x\in E_+$ and $\varepsilon>0$. 
Pick $k\in\mathbb{N}$ with $\|T'-T_k'\|\le\varepsilon$.
By the assumption, $|T_k'f_n|x\to 0$. 
So, let $n_0\in\mathbb{N}$ be such that
$|T_k'f_n|x\le\varepsilon$ whenever $n\ge n_0$.
As $(f_n)$ is $\text{\rm w}^\ast$-null, 
there exists $M\in\mathbb{R}$ with $\|f_n\|\le M$ for all $n\in\mathbb{N}$.
By the Riesz--Kantorovich formula, for $n\ge n_0$,
$$
   |T'f_n|x=\sup\{|(T'f_n)y|: |y|\le x\}\le
$$
$$ 
   \sup\{|((T'-T_k')f_n)y|: |y|\le x\}+\sup\{|(T_k'f_n)y|: |y|\le x\}\le
$$
$$
   \sup\{\|T'-T_k'\|\cdot\|f_n\|\cdot\|y\|: |y|\le x\}+|T_k'f_n|x\le
   \varepsilon(M\|x\|+1).
$$
Therefore $|T'f_n|x\to 0$, and hence $T\in\text{[d]}(E,F)$.

\item
Let $\text{\rm [bi-sP]}(E,F)\ni T_k\stackrel{\|\cdot\|}{\to}T\in\text{\rm L}(E,F)$.
Let $(f_n)$ be \text{\rm w}$^\ast$-null in $F'_+$ and let 
$(x_n)$ be disjoint $\text{\rm w}$-null in $E$.
We need to show $f_n(Tx_n)\to 0$. Pick $M\in{\mathbb R}$ such that
$\|f_n\|\le M$ and $\|x_n\|\le M$ for all $n\in{\mathbb N}$.
Take some $\varepsilon>0$. Pick $k\in{\mathbb N}$ with
$\|T-T_k\|\le\varepsilon$. Since $T_k\in\text{\rm [bi-sP]}(E,F)$,
there exists $n_0\in{\mathbb N}$ such that $|f_n(T_kx_n)|\le\varepsilon$
for $n\ge n_0$. Then
$$
   |f_n(Tx_n)|\le|f_n((T-T_k)x_n)|+|f_n(T_kx_n)|\le 
$$
$$   
   \|f_n\|\cdot\|T-T_k\|\cdot\|x_n\|+\varepsilon\le (M^2+1)\varepsilon 
   \ \ \ (\forall n\ge n_0). 
$$
Since $\varepsilon>0$ is arbitrary, $f_n(Tx_n)\to 0$.

\item
As the case of $\text{\rm [GPP]}(X,Y)$ is similar, 
we consider $\text{\rm [s-GPP]}(E,Y)$ only.

Let $\text{\rm [s-GPP]}(E,Y)\ni T_k\stackrel{\|\cdot\|}{\to}T\in\text{\rm L}(E,Y)$,
and let $A\subseteq E$ be \text{\rm a}-limited. We need to show that $T(A)$ 
is relatively compact. Since \text{\rm a}-limited sets are bounded, 
there exists $M\in{\mathbb R}$ with $\|x\|\le M$ for all $x\in A$. 
Choose $\varepsilon>0$ and pick a $k\in{\mathbb N}$ such that
$\|T-T_k\|\le\varepsilon$. Then
$$
   Tx=T_kx+(T-T_k)x\in T_k(A)+\|T-T_k\|\cdot\|x\|B_Y=
   T_k(A)+\varepsilon M\cdot B_Y
$$ 
for all $x\in A$, and hence $T(A)\subseteq T_k(A)+\varepsilon M\cdot B_Y$. 
By the assumption, $T_k(A)$ is relatively compact.
Since $\varepsilon>0$ is arbitrary, $T(A)$ is totally bounded 
and hence is relatively compact, as desired.

\item
As the case of $\text{\rm [BDP]}(X,Y)$ is similar, 
we consider $\text{\rm [s-BDP]}(E,Y)$ only.

Let $\text{\rm [s-BDP]}(E,Y)\ni T_k\stackrel{\|\cdot\|}{\to}T\in\text{\rm L}(E,Y)$,
and let $A\subseteq E$ be \text{\rm a}-limited. 
We need to show that $T(A)$ is relatively \text{\rm w}-compact.
Since \text{\rm a}-limited sets
are bounded, there exists $M\in{\mathbb R}$ such that
$\|x\|\le M$ for all $x\in A$. Take $\varepsilon>0$ and pick any $k\in{\mathbb N}$ with
$\|T-T_k\|\le\varepsilon$. Then $T(A)\subseteq T_k(A)+\varepsilon M\cdot B_Y$, 
as above in vii).
By the assumption, $T_k(A)$ is relatively \text{\rm w}-compact.
Since $\varepsilon>0$ is arbitrary, $T(A)$ is relatively \text{\rm w}-compact
by the Grothendieck result \cite[Thm.3.44]{AlBu}.
\end{enumerate}
\end{proof}
\noindent
The next result follows from Theorem \ref{P-norm} and 
Lemma \ref{complete in the operator norm}.

\begin{theorem}\label{aff op vs envelop} 
{\rm 
Let $E$ ad $F$ be a Banach lattices. Then
$\text{\rm r-[PSP]}(E,F)$, $\text{\rm r-[DPSP]}(E,F)$, $\text{\rm r-[DDSP]}(E,F)$,
$\text{\rm r-[PGP]}(E,F)$, $\text{\rm r-[DGP]}(E,F)$,
$\text{\rm r-[swl]}(E,F)$, $\text{\rm r-[dswl]}(E,F)$,
$\text{\rm r-[sw}^\ast\text{\rm l]}(E,F)$, $\text{\rm r-[d]}(E,F)$,
$\text{\rm r-[bi-sP]}(E,F)$, $\text{\rm r-[GPP]}(E,F)$,\\
$\text{\rm r-[s-GPP]}(E,F)$, $\text{\rm r-[BDP]}(E,F)$, 
and $\text{\rm r-[s-BDP]}(E,F)$ are all Banach spaces, 
each under its own enveloping norm.} 
\end{theorem}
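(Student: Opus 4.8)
The plan is to invoke Theorem~\ref{P-norm} (Assertion~\ref{P-norm}), which says that if $\mathcal{P}$ is a subspace of $\text{\rm L}(E,F)$ that is closed in the operator norm, then $\text{\rm r-}\mathcal{P}(E,F)$ is a Banach space under the enveloping norm. So the statement reduces entirely to verifying the two hypotheses --- ``subspace'' and ``operator-norm closed'' --- for each of the fourteen classes of affiliated operators listed. But this verification is exactly the content of Lemma~\ref{complete in the operator norm}: it asserts that $\text{\rm [PSP]}$, $\text{\rm [DPSP]}$, $\text{\rm [DDSP]}$, $\text{\rm [PGP]}$, $\text{\rm [DGP]}$, $\text{\rm [swl]}$, $\text{\rm [dswl]}$, $\text{\rm [sw}^\ast\text{\rm l]}$, $\text{\rm [d]}$, $\text{\rm [bi-sP]}$, $\text{\rm [GPP]}$, $\text{\rm [s-GPP]}$, $\text{\rm [BDP]}$, and $\text{\rm [s-BDP]}$ (the latter four for the relevant pairs of spaces, in particular taking $X=E$, $Y=F$) are vector spaces and are complete, hence closed, in the operator norm. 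So the proof is a one-line deduction: each listed class $\mathcal{P}$ is an operator-norm-closed subspace by Lemma~\ref{complete in the operator norm}, hence $\text{\rm r-}\mathcal{P}(E,F)$ is a Banach space under its enveloping norm by Assertion~\ref{P-norm}.

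Concretely, I would first note that all fourteen classes appearing in the theorem are among those treated in Lemma~\ref{complete in the operator norm} once one specializes the domain/codomain to Banach lattices $E$ and $F$ (the lemma states several of them for general $X$ or $Y$, but $E$ and $F$ are in particular Banach spaces, so the closedness statements apply verbatim; for $\text{\rm [PGP]}$ and $\text{\rm [DGP]}$ the lemma gives $\text{\rm [PGP]}(X,F)$ and $\text{\rm [DGP]}(X,F)$, again covering the case $X=E$). Then for each such class $\mathcal{P}\subseteq\text{\rm L}(E,F)$: it is a subspace by the vector-space part of Lemma~\ref{complete in the operator norm}, and it is closed in the operator norm by the completeness part (a complete subspace of a Banach space is closed). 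Applying Assertion~\ref{P-norm} to each $\mathcal{P}$ in turn yields that $\text{\rm r-}\mathcal{P}(E,F)$ is a Banach space under the enveloping norm~\eqref{enveloping norm}, which is precisely the assertion of the theorem.

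There is essentially no obstacle here, since the theorem is explicitly advertised in the excerpt as following from Assertion~\ref{P-norm} and Lemma~\ref{complete in the operator norm}; the only thing to be a little careful about is the bookkeeping --- making sure every one of the fourteen names in the statement is actually covered by one of the items (i)--(viii) of Lemma~\ref{complete in the operator norm}, with the right domain and codomain. The one genuine subtlety, if any, is that Assertion~\ref{P-norm} requires $\mathcal{P}$ to be a \emph{subspace} (closed under addition and scalar multiplication), which is why Lemma~\ref{complete in the operator norm} bothers to record --- even if it ``skips the trivial checking'' --- that each class is a vector space; I would simply cite that. Thus the proof reads:

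\begin{proof}
Each of the classes $\text{\rm [PSP]}(E,F)$, $\text{\rm [DPSP]}(E,F)$, $\text{\rm [DDSP]}(E,F)$, $\text{\rm [PGP]}(E,F)$, $\text{\rm [DGP]}(E,F)$, $\text{\rm [swl]}(E,F)$, $\text{\rm [dswl]}(E,F)$, $\text{\rm [sw}^\ast\text{\rm l]}(E,F)$, $\text{\rm [d]}(E,F)$, $\text{\rm [bi-sP]}(E,F)$, $\text{\rm [GPP]}(E,F)$, $\text{\rm [s-GPP]}(E,F)$, $\text{\rm [BDP]}(E,F)$, $\text{\rm [s-BDP]}(E,F)$ is, by Lemma~\ref{complete in the operator norm}, a subspace of $\text{\rm L}(E,F)$ which is complete, and hence closed, in the operator norm. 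Applying Assertion~\ref{P-norm} to each of these classes $\mathcal{P}$ shows that the corresponding space $\text{\rm r-}\mathcal{P}(E,F)$ is a Banach space under the enveloping norm~\eqref{enveloping norm}.
\end{proof}
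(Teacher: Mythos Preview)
Your proof is correct and follows exactly the same approach as the paper, which simply states that the theorem follows from Assertion~\ref{P-norm} and Lemma~\ref{complete in the operator norm}. Your additional bookkeeping remarks (specializing $X$, $Y$ to $E$, $F$ where needed, and noting that completeness implies closedness) are accurate and make explicit what the paper leaves implicit.
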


\section{Domination for affiliated operators} 

Here we gather domination results for defined above affiliated operators.
Some of them already appeared in the literature, the others seem new. 

\subsection{}
The \text{\rm [s-GPP]-}operators do not satisfy the domination property in the 
strong sense that even an operator which is dominated by a rank one operator
need not to be a \text{\rm [GPP]}-operator.

\begin{example}\label{[s-GPP] vs rank one}
{\em (cf. \cite[Ex.5.30]{AlBu})
Define operators $T,S: L^1[0,1]\to\ell^\infty$ by 
$T(f):=(\int_0^1 f(t)dt)_{k=1}^\infty$, and
$S(f):=(\int_0^1 f(t)r_k^+(t)dt)_{k=1}^\infty$,
where $r_k$ are the Rademacher functions on $[0,1]$. 
Then $T$ is a rank one operator, and hence
$T\in\text{\rm [s-GPP]}(L^1[0,1],\ell^\infty)$.
Moreover, $0\le S\le T$, yet $S$ is not 
a \text{\rm [GPP]}-operator.
To see this, consider the sequence of the Rademacher functions
$(r_n)$ in $[0,\mathbb{1}]\subseteq L^1[0,1]$, which is 
an \text{\rm a}-limited subset of $L^1[0,1]$, e.g.
by Proposition \ref{if (d) then o-intervals are a-limited}.
The sequence $(Sr_n)=(\frac{1}{2}e_n)$, where $e_n$ are
the n-th unite vectors in $\ell^\infty$, has no norm convergent
subsequences, and hence 
$S\not\in\text{\rm [GPP]}(L^1[0,1],\ell^\infty)$.}
\end{example} 
\noindent
We do not know whether or not the operator $S$ in 
Example \ref{[s-GPP] vs rank one} is a \text{\rm [BDP]}-operator.

\subsection{}
It turn out that the property \text{\rm (d)} and 
the sequential \text{\rm w}-continuity ($\text{\rm w}^\ast$-continuity)
of lattice operations play an important role for the domination property. 
Firstly, we include some related elementary facts. 

\begin{proposition}\label{if (d) then o-intervals are a-limited}
The following are equivalent.
\begin{enumerate}[{\em i)}]
\item 
$E\in\text{\rm (d)}$.
\item
Each order interval in $E$ is \text{\rm a}-limited.
\end{enumerate}
\end{proposition}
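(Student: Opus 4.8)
\textbf{Proof plan for Proposition~\ref{if (d) then o-intervals are a-limited}.}
The plan is to prove the two implications separately, using the characterization of \text{\rm a}-limited sets from Assertion~\ref{limited and DP sets}~(iv): a bounded $A \subseteq E$ is \text{\rm a}-limited iff $f_n(a_n) \to 0$ for every disjoint $\text{\rm w}^\ast$-null $(f_n)$ in $E'$ and every choice of $(a_n)$ in $A$. For the implication i)$\Rightarrow$ii), I would fix $x \in E_+$ and a disjoint $\text{\rm w}^\ast$-null sequence $(f_n)$ in $E'$; since $E \in \text{\rm (d)}$, the sequence $(|f_n|)$ is $\text{\rm w}^\ast$-null, hence $|f_n|(x) \to 0$. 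For any sequence $(a_n)$ in the order interval $[-x,x]$ (or $[0,x]$), the estimate $|f_n(a_n)| \le |f_n|(|a_n|) \le |f_n|(x) \to 0$ gives the conclusion via Assertion~\ref{limited and DP sets}~(iv). This direction is essentially the same computation already used in the necessity half of Proposition~\ref{(d) ve ao-lim}~i), just phrased for the identity operator.

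For the converse ii)$\Rightarrow$i), I would assume every order interval in $E$ is \text{\rm a}-limited and take an arbitrary disjoint $\text{\rm w}^\ast$-null sequence $(f_n)$ in $E'$; the goal is to show $(|f_n|)$ is $\text{\rm w}^\ast$-null, i.e. $|f_n|(x) \to 0$ for every $x \in E_+$. By the Riesz--Kantorovich formula, $|f_n|(x) = \sup\{f_n(y) : |y| \le x\} = \sup\{f_n(y) : y \in [-x,x]\}$. The natural move is: for each $n$ pick $y_n \in [-x,x]$ with $f_n(y_n) \ge |f_n|(x) - 1/n$ (or $\ge \tfrac12 |f_n|(x)$); since $[-x,x]$ is \text{\rm a}-limited and $(f_n)$ is disjoint $\text{\rm w}^\ast$-null, Assertion~\ref{limited and DP sets}~(iv) yields $f_n(y_n) \to 0$, whence $|f_n|(x) \to 0$. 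Since $x \in E_+$ was arbitrary, $(|f_n|)$ is $\text{\rm w}^\ast$-null and $E \in \text{\rm (d)}$.

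The only subtlety worth care is making sure the selection of $y_n$ is legitimate: this is just the definition of supremum, so no choice principle beyond the ordinary one is needed, and no order-completeness of $E$ is required because we only use the pointwise Riesz--Kantorovich identity for $|f_n| \in E'$, which holds in any Banach lattice. I expect the main (very mild) obstacle to be purely bookkeeping: correctly invoking Assertion~\ref{limited and DP sets}~(iv) with the sequence $(y_n)$ ranging in the fixed order interval rather than a single point, and noting that boundedness of $[-x,x]$ (needed for ``\text{\rm a}-limited'') is automatic. Everything else is the Riesz--Kantorovich formula plus the already-recorded inequality $|f_n(a)| \le |f_n|(|a|)$.
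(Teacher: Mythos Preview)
Your proposal is correct and follows essentially the same route as the paper: both directions hinge on the Riesz--Kantorovich formula $|f_n|(x)=\sup\{|f_n(y)|:|y|\le x\}$ together with the sequence characterization of \text{\rm a}-limited sets. The only cosmetic difference is in ii)$\Rightarrow$i): the paper uses directly that ``$(f_n)$ uniformly null on $[-x,x]$'' means $\sup_{|y|\le x}|f_n(y)|\to 0$, which \emph{is} $|f_n|(x)\to 0$, whereas you reach the same conclusion via an explicit choice of near-maximizers $y_n$; both arguments are equivalent and equally short.
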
 

\begin{proof}
i)$\Longrightarrow$ii) 
It suffices to show that intervals $[-a,a]$ are \text{\rm a}-limited.
Let $a\in E_+$, and let $(f_n)$ be disjoint \text{\rm w}$^\ast$-null 
in $E'$. We need to show that $(f_n)$ is uniformly null on $[-a,a]$.
By Assertion \ref{uniform convergence on A}, it is enough to show 
that $f_n(a_n)\to 0$ for each sequence $(a_n)$ in $[-a,a]$.
So, let $(a_n)$ be in $[-a,a]$. Since $E\in\text{\rm (d)}$ then
$(|f_n|)$ is \text{\rm w}$^\ast$-null in $E'_+$, and hence 
$f_n(a)\to 0$. It follows from $-f_n(a)\le f_n(a_n)\le f_n(a)$ 
for all $n\in\mathbb{N}$ that $f_n(a_n)\to 0$. 
By Assertion \ref{uniform convergence on A}, 
$(f_n)$ is uniformly null on $[-a,a]$, as desired.

ii)$\Longrightarrow$i) 
Let $(f_n)$ be disjoint \text{\rm w}$^\ast$-null in $E'$.
We need to show that $(|f_n|)$ is \text{\rm w}$^\ast$-null.
Pick an $a\in E_+$. By the assumption, $(f_n)$ is uniformly 
null on $[-a,a]$, and in view of the Riesz--Kantorovich formula, 
$|f_n|a=\sup\limits_{y\in[-a,a]}|f_n(y)|\to 0$. Since $a\in E_+$ 
is arbitrary, $(|f_n|)$ is \text{\rm w}$^\ast$-null, as desired.
\end{proof}
\noindent 
The proof of the following result of \cite{KM} consists in 
removing the disjointness condition in the proof of
Proposition \ref{if (d) then o-intervals are a-limited}.

\begin{assertion}\label{KM}
{\rm The following are equivalent.
\begin{enumerate}[(i)]
\item 
$E'$ has sequentially $\text{\rm w}^\ast$-continuous lattice operations.
\item
Each order interval in $E$ is limited.
\end{enumerate}}
\end{assertion}

\noindent
Here we gather several (partially positive) domination results. 

\begin{theorem}\label{main dominated}
{\em Let $E$ and $F$ be Banach lattices. The following spaces of operators
satisfy the domination property.
\begin{enumerate}[{\rm i)}]
\item 
$\text{\rm [PSP]}(E,F)$. 
\item 
$\text{\rm [DPSP]}(E,F)$.
\item
$\text{\rm [DDSP]}(E,F)$, under the assumption $F\in\text{\rm (d)}$.
\item
$\text{\rm [PGP]}(E,F)$. 
\item
$\text{\rm [DGP]}(E,F)$, under the assumption $F\in\text{\rm (d)}$.
\item 
$\text{\rm [dswl]}(E,F)$.
\item 
$\text{\rm [swl]}(E,F)$, under the assumption that
$E$ has sequentially \text{\rm w}-continuous lattice operations.
\item
$\text{\rm [sw}^\ast\text{\rm l]}(E,F)$,
under the assumption that
$F'$ has sequentially $\text{\rm w}^\ast$-continuous lattice operations.
\item
$\text{\rm [d]}(E,F)$, under the assumption $F\in\text{\rm (d)}$.
\item
$\text{\rm [bi-sP]}(E,F)$.
\end{enumerate}}
\end{theorem}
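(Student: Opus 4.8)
The plan is to establish each of the ten domination statements by the same template: assume $0\le S\le T$ with $T$ in the relevant class, fix a witnessing sequence (a disjoint or non-disjoint $\text{\rm w}$- or $\text{\rm w}^\ast$-null sequence, possibly in the positive cone, depending on the class), and bound the relevant scalar or norm quantity for $S$ by the one for $T$. The crucial structural observation is that all the defining conditions in Definitions \ref{affiliated operators EtoY}, \ref{affiliated operators XtoF}, \ref{affiliated operators EtoF} are phrased via \emph{positive} test elements: for (i), (ii), (iv), (x) one tests against $x_n\in E_+$ or $f_n\in F'_+$, so from $0\le S\le T$ one gets $0\le Sx_n\le Tx_n$ (hence $\|Sx_n\|\le\|Tx_n\|$ by monotonicity of the norm) or $0\le S'f_n\le T'f_n$ (note $0\le S\le T$ implies $0\le S'\le T'$), and the conclusion for $S$ is immediate from that for $T$. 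For the $\text{\rm w}$-convergence statements (iv), (vii) one additionally uses that the order interval $[0,Tx_n]$ in $F$ is $\text{\rm w}$-compact-free in the sense that $0\le u_n\le v_n$ with $(v_n)$ $\text{\rm w}$-null forces $(u_n)$ $\text{\rm w}$-null --- this is a standard consequence of positivity of functionals (test against $g\in F'_+$, use the Riesz decomposition of arbitrary $g$).

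\textbf{The disjoint cases and the role of (d).} Items (iii), (v), (ix) concern classes defined via \emph{disjoint} $\text{\rm w}^\ast$-null sequences $(f_n)$ in $F'$, and here the naive argument breaks because $0\le S'\le T'$ does \emph{not} give $0\le S'f_n\le T'f_n$ when $f_n$ is merely disjoint (not positive). This is exactly why the hypothesis $F\in\text{\rm (d)}$ is imposed. The plan is: given disjoint $\text{\rm w}^\ast$-null $(f_n)$ in $F'$, invoke $F\in\text{\rm (d)}$ to conclude $(|f_n|)$ is $\text{\rm w}^\ast$-null in $F'_+$; then $0\le S'|f_n|\le T'|f_n|$, and since $T$ is in the class and $|f_n|$ is positive $\text{\rm w}^\ast$-null we get the conclusion for $T'|f_n|$ (norm null in (iii), $\text{\rm w}$-null in (v), $\text{\rm w}^\ast$-null after applying $|{\cdot}|$ in (ix)); finally transfer to $S'f_n$ via the pointwise domination $|S'f_n|\le S'|f_n|\le T'|f_n|$ together with $|f_n\text{-quantity for }T|\le T'|f_n|\text{-quantity}$. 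For (iii) this uses $\|S'f_n\|\le\|S'|f_n|\,\|\le\|T'|f_n|\,\|\to 0$; for (ix) one bounds $|S'f_n|x\le (S'|f_n|)x\le (T'|f_n|)x\to 0$ for each $x\in E_+$ via the Riesz--Kantorovich formula, exactly mirroring the proof of Proposition \ref{(d) ve ao-lim}. Items (viii) is analogous but with the disjointness removed and $F'$ having sequentially $\text{\rm w}^\ast$-continuous lattice operations playing the role of (d) --- indeed by Proposition \ref{(d) and sw*l}(ii) that hypothesis makes $\text{\rm [sw}^\ast\text{\rm l]}(E,F)=\text{\rm L}_r(E,F)$, so domination is automatic; similarly (vi) needs no hypothesis because $[-a,a]$ order-boundedness and Assertion \ref{disj w-null is mod w-null} reduce it to a positive test.

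\textbf{Main obstacle.} I expect the genuine content to lie in items (iii), (v), (ix): one must be careful that $F\in\text{\rm (d)}$ alone (without Dedekind completeness of $F$, so without existence of $|S|$ as an operator) still lets the argument go through --- the point is that we never need the \emph{operator} modulus, only the \emph{pointwise} inequality $|S'f_n|\le S'|f_n|$, which holds for any positive operator $S'$. The other subtle spot is (vii): to pass from "$(|Tx_n|)$ $\text{\rm w}$-null" to "$(|Sx_n|)$ $\text{\rm w}$-null" when $(x_n)$ is $\text{\rm w}$-null but not positive, one first uses $E$'s sequentially $\text{\rm w}$-continuous lattice operations to replace $x_n$ by $|x_n|$ (getting $(|x_n|)$ $\text{\rm w}$-null in $E_+$), then $0\le S|x_n|\le T|x_n|$ and $|Sx_n|\le S|x_n|$ together with the order-ideal argument for $\text{\rm w}$-null domination in $F$; the hypothesis on $E$ is what makes this reduction legitimate. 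I would write out (i) and (iii) in full as the representative positive and disjoint cases, then remark that (ii), (iv), (vi), (viii), (x) follow the positive template and (v), (ix) follow the (d)-template with the obvious modifications, and that (vii) combines the two reductions.
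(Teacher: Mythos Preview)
Your proposal is correct and follows essentially the same template as the paper's proof: the positive-test cases (i), (ii), (iv), (vi), (vii), (x) are handled via $0\le Sx_n\le Tx_n$ or $0\le S'f_n\le T'f_n$ after reducing to positive test sequences (using Assertion~\ref{disj w-null is mod w-null} for (vi), (x) and the hypothesis on $E$ for (vii)), while (viii), (ix) are deduced from Proposition~\ref{(d) and sw*l}, exactly as in the paper. One small imprecision: in your (d)-template for (iii) and (v) you write that $(|f_n|)$ is ``positive $\text{\rm w}^\ast$-null'' and invoke membership of $T$ in the class, but $\text{\rm [DDSP]}$ and $\text{\rm [DGP]}$ are defined via \emph{disjoint} $\text{\rm w}^\ast$-null sequences, not positive ones---your argument still works because $(|f_n|)$ inherits disjointness from $(f_n)$, but you should say so explicitly. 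The paper itself simply cites \cite[Cor.~3]{El} and \cite[Prop.~3.7]{GM} for (iii) and (v), so your direct argument is in fact more self-contained than the paper's.
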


\begin{proof}
As above, we restrict ourselves to basic cases.
\begin{enumerate}[{\rm i)}]
\item
Let $0\le S\le T\in\text{\rm [PSP]}(E,F)$ and 
let $(x_n)$ be \text{\rm w}-null in $E_+$. 
Since $T\in\text{\rm [PSP]}(E,F)$
then $\|Tx_n\|\to 0$. It follows from
$0\le Sx_n\le Tx_n$ that $\|Sx_n\|\to 0$,
and hence $S\in\text{\rm [PSP]}(E,F)$.
\item
Let $0\le S\le T\in\text{\rm [DPSP]}(E,F)$ and 
let $(f_n)$ be \text{\rm w}$^\ast$-null in $F'_+$.
Since $T\in\text{\rm [DPSP]}(E,F)$
then $\|T'f_n\|\to 0$. It follows from 
$0\le S'\le T'$ that $0\le S'f_n\le T'f_n$,
and hence $\|S'f_n\|\to 0$. Thus, $S\in\text{\rm [DPSP]}(E,F)$.
\item
As \text{\rm [DDSP]}-operators agree with almost limited operators, 
we refer for the proof to \cite[Cor.3]{El}.
\item
Let $0\le S\le T\in\text{\rm [PGP]}(E,F)$, 
and $(f_n)$ be \text{\rm w}$^\ast$-null in $F_+'$.
In order to prove $S\in\text{\rm [PGP]}(E,F)$, it suffices
to prove $g(S'f_n)\to 0$ for all $g\in E'_+$. Let $g\in E'_+$. 
Since $T\in\text{\rm [PGP]}(E,F)$, $g(T'f_n)\to 0$.
It follows from $0\le g(S'f_n)\le g(T'f_n)$ that
$g(S'f_n)\to 0$, as desired.
\item
As \text{\rm [DGP]}-operators agree with almost Grothendieck
operators, we refer for the proof to \cite[Prop.3.7]{GM}.
\item
Let $0\le S\le T\in\text{\rm [dswl]}(E,F)$, and 
let $(x_n)$ be disjoint \text{\rm w}-null in $E$.
In order to prove $S\in\text{\rm [dswl]}(E,F)$, it suffices
to prove $f(|Sx_n|)\to 0$ for all $f\in F'_+$. So, let $f\in F'_+$.
By Assertion \ref{disj w-null is mod w-null}, $(|x_n|)$ is \text{\rm w}-null.
Since $T\in\text{\rm [dswl]}(E,F)$ then 
$(T|x_n|)=(|T(|x_n|)|)$ is \text{\rm w}-null, and hence
$f(T|x_n|)\to 0$. It follows from
$|Sx_n|\le S|x_n|\le T|x_n|$ that $f(|Sx_n|)\to 0$ as desired.
\item
Let $0\le S\le T\in\text{\rm [swl]}(E,F)$, 
and $(x_n)$ be \text{\rm w}-null in $E$.
It suffices to prove $f(|Sx_n|)\to 0$ for all $f\in F'_+$.
Let $f\in F'_+$. By the assumption, $(|x_n|)$ is 
\text{\rm w}-null. Since $T\in\text{\rm [swl]}(E,F)$, 
$f(T|x_n|)=f(|Tx_n|)\to 0$. In view of $|Sx_n|\le S|x_n|\le T|x_n|$,
$f(|Sx_n|)\to 0$, and hence $S\in\text{\rm [swl]}(E,F)$.
\item
It follows from Proposition \ref{(d) and sw*l} ii).
\item 
It follows from Proposition \ref{(d) and sw*l} i).
\item
Let $0\le S\le T\in\text{\rm [bi-sP]}(E,F)$.
Let $(f_n)$ be \text{\rm w}$^\ast$-null in $F'_+$, and
let $(x_n)$ be disjoint \text{\rm w}-null in $E$.
In order to prove $S\in\text{\rm [bi-sP]}(E,F)$, it suffices
to prove $f_n(Sx_n)\to 0$. By Assertion \ref{disj w-null is mod w-null}, 
$(|x_n|)$ is disjoint \text{\rm w}-null in $E$, 
and, since $T\in\text{\rm [bi-sP]}(E,F)$, then $f_n(T|x_n|)\to 0$. 
It follows from $|f_n(Sx_n)|\le f_n(S|x_n|)\le f_n(T|x_n|)$ 
that $f_n(Sx_n)\to 0$, and hence $S\in\text{\rm [bi-sP]}(E,F)$. 
\end{enumerate}
\end{proof}
\noindent
In view of \cite[Thm.4.31]{AlBu}, the next fact follows 
form Theorem~\ref{main dominated}~vii).

\begin{corollary}\label{swl dominated property if E is AM}
Let $E$ be an \text{\rm AM}-space, and let $0\le S\le T\in\text{\rm [swl]}(E,F)$. 
Then $S\in\text{\rm [swl]}(E,F)$.
\end{corollary}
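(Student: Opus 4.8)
The plan is to derive Corollary~\ref{swl dominated property if E is AM} as an immediate consequence of Theorem~\ref{main dominated}~vii), whose hypothesis is that $E$ has sequentially \text{\rm w}-continuous lattice operations, i.e. $E\in\text{\rm (swl)}$. So the only thing to check is that an AM-space $E$ automatically has this property; then the conclusion follows verbatim by invoking part vii).

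First I would recall what \cite[Thm.4.31]{AlBu} gives: in an AM-space, a \text{\rm w}-null (more precisely, \text{\rm w}-convergent) sequence has \text{\rm w}-convergent moduli; equivalently, the lattice operations of an AM-space are sequentially \text{\rm w}-continuous. (The cited theorem states that in an AM-space the lattice operations are weakly sequentially continuous, which is exactly the membership $E\in\text{\rm (swl)}$ in the terminology of Definition~\ref{Schur property}~h).) Thus, for any \text{\rm w}-null $(x_n)$ in $E$, the sequence $(|x_n|)$ is \text{\rm w}-null.

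Next, given $0\le S\le T\in\text{\rm [swl]}(E,F)$ with $E$ an AM-space, I would simply apply Theorem~\ref{main dominated}~vii): since $E\in\text{\rm (swl)}$, the hypothesis of that part is met, and its conclusion yields $S\in\text{\rm [swl]}(E,F)$. For completeness one could even reproduce the one-line argument of vii): fix $f\in F'_+$ and a \text{\rm w}-null $(x_n)$ in $E$; by the AM-space property $(|x_n|)$ is \text{\rm w}-null, so $f(T|x_n|)=f(|Tx_n|)\to 0$ since $T\in\text{\rm [swl]}(E,F)$; then from $|Sx_n|\le S|x_n|\le T|x_n|$ one gets $0\le f(|Sx_n|)\le f(T|x_n|)\to 0$, and since $f\in F'_+$ is arbitrary $(|Sx_n|)$ is \text{\rm w}-null, i.e. $S\in\text{\rm [swl]}(E,F)$.

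There is essentially no obstacle here — the corollary is a direct specialization. The only point requiring care is to make sure the classical statement being cited really delivers $E\in\text{\rm (swl)}$ and not merely some weaker disjoint version; since \cite[Thm.4.31]{AlBu} concerns arbitrary (not necessarily disjoint) weakly convergent sequences in AM-spaces, this is exactly what is needed, and no disjointness hypothesis is introduced along the way.
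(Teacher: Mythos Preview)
Your proposal is correct and follows exactly the paper's own argument: cite \cite[Thm.4.31]{AlBu} to conclude that an AM-space has sequentially \text{\rm w}-continuous lattice operations, and then invoke Theorem~\ref{main dominated}~vii). The optional unpacking of the one-line domination argument is also faithful to the proof of vii) in the paper.
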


{\tiny 
}


\begin{thebibliography}{30}

\bibitem{AlBu}
C. D. Aliprantis, O. Burkinshaw, 
Positive Operators. 
Springer, Dordrecht, 2006.

\bibitem{AEG2}
S. Alpay, E. Emelyanov, S. Gorokhova,
$o\tau$-Continuous, Lebesgue, KB, and Levi Operators 
Between Vector Lattices and Topological Vector Spaces.  
Results Math 77, 117 (2022), 25pp.

\bibitem{AEG3}
S. Alpay, E. Emelyanov, S. Gorokhova, 
Generalizations of L- and M-weakly compact operators.
https://arxiv.org/abs/2206.02718v3

\bibitem{AEG4}
S. Alpay, E. Emelyanov, S. Gorokhova, 
Regularly limited, Grothendieck, and Dunford--Pettis operators.
https://arxiv.org/abs/2208.02303v3

\bibitem{AqBo}
B. Aqzzouz, K. Bouras, 
(L) sets and almost (L) sets in Banach lattices. 
Quaest. Math. 36, no.1 (2013), 107--118. 

\bibitem{AE}
B. Aqzzouz, A. Elbour, 
Some characterizations of almost Dunford--Pettis operators and applications.
Positivity 15 (2011), 369--380.

\bibitem{AEW}
B. Aqzzouz, A. Elbour, A. W. Wickstead, 
Positive almost Dunford--Pettis operators and their duality. 
Positivity 15 (2011), 185--197.

\bibitem{AM}
B. Aqzzouz, J. H'Michane, 
The class of b-AM-compact operators. 
Quaest. Math. 36 (2013), 309--319.

\bibitem{Bou}
K. Bouras,
Almost Dunford--Pettis sets in Banach lattices. 
Rend. Circ. Mat. Palermo 62 (2013), 227--236.

\bibitem{BLM1}
K. Bouras, D. Lhaimer, M. Moussa,
On the class of almost L-weakly and almost M-weakly compact operators.
Positivity 22 (2018), 1433--1443.

\bibitem{BD}
J. Bourgain, J. Diestel, 
Limited operators and strict cosingularity. 
Math. Nachr. 119 (1984), 55--58.

\bibitem{Buh}
A. V. Buhvalov,
Locally convex spaces that are generated by weakly compact sets.
Vestnik Leningrad Univ. 7 Mat. Mekh. Astronom. Vyp.2 (1973), 11--17.

\bibitem{CCJ}
J. X. Chen, Z. L. Chen, G. X. Jia,
Almost limited sets in Banach attices.
J. Math. Anal. Appl. 412 (2014), 547--553.

\bibitem{Diestel}
J. Diestel,  
Sequences and series in Banach spaces. 
Graduate Texts in Math. 92. Berlin, Heidelberg, New York: Springer, 1984. 

\bibitem{DU}
J. Diestel, J. J. Uhl, 
Vector measures. 
Mathematical Surveys No. 15, American Math. Soc., 1977.

\bibitem{DF}
P. G. Dodds, D. H. Fremlin,
Compact operators in Banach lattices.
Isr. J. Math. 34 (1979), 287--320.

\bibitem{El}
A. Elbour,
Some characterizations of almost limited operators.
Positivity 21 (2017), 865--874.

\bibitem{EAS}
A. Elbour, F. Afkir, M. Sabiri,
Some properties of almost L-weakly and almost M-weakly compact operators.
Positivity 24 (2020), 141--149.

\bibitem{EMM}
A. Elbour, N. Machrafi, M. Moussa, 
Weak Compactness of Almost Limited Operators. 
Journal of Function Spaces, (2014), Article ID 263159, 4 pages.

\bibitem{FKM}
K. El Fahri, A. El Kaddouri, M. Moussa,
Some results on limited operators.
Acta Univ. Apulensis Math. Inform. 43 (2015), 125--135.

\bibitem{Emel}
E. Emelyanov,
Algebras of Lebesgue and KB regular operators on Banach lattices.
https://arxiv.org/abs/2203.08326v3

\bibitem{Emma1}
G. Emmanuele, 
The (BD) property in $L_1(\mu,E)$.
Indiana Univ. Math. J. 36, (1987), 229--230.

\bibitem{GM}
P. Galindo, V. C. C. Miranda,
Grothendieck-type subsets of Banach lattices.
J. Math. Anal. Appl. 506 (2022)

\bibitem{Ghenciu}
I. Ghenciu, 
The weak Gelfand--Phillips property in spaces of compact operators.
Comment. Math. Univ. Carol. 58 (2017), 35--47.

\bibitem{GK}
M. Gonz\'{a}lez, T. Kania,
Grothendieck spaces: the landscape and perspectives.
Japan. J. Math. 16 (2021), 247--313.

\bibitem{MF}
J. H'Michane, K. El Fahri,
On the domination of limited and order Dunford--Pettis operators.
Ann. Math. Que. 39 (2015), 169--176.

\bibitem{KM}
A. El Kaddouri, Mohammed Moussa,
About the class of ordered limited operators.
Acta Univ. Carolinae 54 (2013), 37--43.

\bibitem{KFMM}
A. El Kaddouri, K. El Fahri,  J. H'Michane, M. Moussa,
The class of order-almost limited operators on Banach lattices.
Acta Math. Univ. Comenian. (N.S.) 84 (2015), 97--102.

\bibitem{BLM0}
D. Lhaimer, K. Bouras,  M. Moussa,
On the class of b-L-weakly and order M-weakly compact operators. 
Math. Bohem. 145, no. 3 (2020),  255--264.

\bibitem{BLM2}
D. Lhaimer, K. Bouras,  M. Moussa,
On the class of order $L$- and order $M$-weakly compact operators.
Positivity 25 (2021), 1569--1578.

\bibitem{LM1}
M. L. Lourenco, V. C. C. Miranda,
The property (d) and the almost limited completely continuous operators.
https://arxiv.org/pdf/2011.02890v3

\bibitem{MEM}
N. Machrafi, A. Elbour, M. Moussa,
Some characterizations of almost limited sets and applications.
https://arxiv.org/abs/1312.2770v2

\bibitem{MFMA}
N. Machrafi, K. El Fahri, M. Moussa, B. Altin, 
A note on weak almost limited operators.
Hacet. J. Math. Stat. 48(3) (2019) 759--770.

\bibitem{Mey}
P. Meyer-Nieberg, 
Banach Lattices. 
Universitext, Springer-Verlag, Berlin 1991.

\bibitem{San}
J. A. Sanchez, 
Operators on Banach lattices. 
Ph. D. Thesis, Complutense University, Madrid, 1985.

\bibitem{Wnuk1}
W. Wnuk, 
Banach spaces with properties of the Schur type -- a survey. 
Confer. Sem. Mat. Univ. Bari. 29 (1993), 1--25.

\bibitem{Wnuk3}
W. Wnuk, 
On the dual positive Schur property in Banach lattices.
Positivity 17 (2013), 759--773.

\bibitem{Za}
A. C. Zaanen, 
Riesz spaces II.
Amsterdam, The Netherlands: North-Holland Publishing, 1983.
\end{thebibliography}
\end{document}